\newtheorem{thm}{Theorem}
\newtheorem{lem}{Lemma}
\newtheorem{cor}[thm]{Corollary}
\newtheorem{conj}{Conjecture}
\newcommand{\R}{\mathbb{R}}
\newcommand{\Z}{\mathbb{Z}}
\newtheorem{thmA}{Theorem}
\title[On two conjectures concerning squarefree numbers]{On two conjectures concerning squarefree numbers in arithmetic progressions}
\address{EPFL SB MATHGEOM TAN\\ Station 8\\ CH-1015 Lausanne\\ Switzerland}
\email{ramon.moreiranunes@epfl.ch}
\begin{document}
%\runningtitle{On two conjectures concerning squarefree numbers}
%\title[On two conjectures concerning squarefree numbers]{On two conjectures concerning squarefree numbers in arithmetic progressions}
%% If there is more than one author, put \cauthor immediately before
%% the corresponding author.
%\cauthor %% mark the next author as corresponding author
\author{Ramon M. Nunes} \thanks{This work was partially supported by the labex LMH through the grant no. ANR-11-LABX-0056-LMH in the “Programme des Investissements d'Avenir”.}
%\address[1]{EPFL SB MATHGEOM TAN\\ Station 8\\ CH-1015 Lausanne\\ Switzerland \email{ramon.moreiranunes@epfl.ch}}
%% If there are several authors, list them here
%\author[2]{Second author}
%\address[2]{Second address\email{a@net.com}}

%% List the authors, initials and surnames only, for the
%% running head (left hand page)
%\authorheadline{R. M. Nunes}

%% If there is a dedication, include it here
%\dedication{Dedicated to ...}

%\support{}

\begin{abstract}
We prove upper bounds for the error term of the distribution of squarefree numbers up to $X$ in arithmetic progressions $\!\!\pmod q$ making progress towards two well-known conjectures concerning this distribution and improving upon earlier results by Hooley. We make use of recent estimates for short exponential sums by Bourgain-Garaev and for exponential sums twisted by the M\"obius function by Bourgain and Fouvry-Kowalski-Michel.
\end{abstract}

%% - subject classification and keywords
%% 2010 American Mathematical Society Subject Classification
%% Provide only ONE primary classification
%\classification{Primary 11N37; Secondary 11L05}
%% Four or five keywords or phrases
\keywords{squarefree integers, arithmetic progressions, exponential sums}

\subjclass[2010]{Primary 11N37; Secondary 11L05}

\maketitle

\section{Introduction}

The distribution of arithmetic sequences in arithmetic progressions is a central subject in analytic number theory. Let $f:\Z_{>0}\rightarrow \R_{>0}$ be a positive arithmetic sequence. If $f$ is sufficiently reasonable, one expects that, for all $(a,q)=1$, we have

\begin{equation}\label{as}
\sum_{\substack{n\leq X\\ n\equiv a\!\!\!\!\!\pmod{q}}}f(n)\sim\frac{1}{\varphi(q)}\sum_{\substack{n\leq X\\ (n,q)=1}}f(n)\;\;\;\;\;\;\;\;(X\rightarrow \infty).
\end{equation}

Two important questions that arise naturally concern the uniformity of such a formula (see section \ref{range} below) and the size of the error term

\begin{equation}
E_f(X,q,a):=\sum_{\substack{n\leq X\\ n\equiv a\!\!\!\!\!\pmod{q}}}f(n)-\frac{1}{\varphi(q)}\sum_{\substack{n\leq X\\ (n,q)=1}}f(n).
\end{equation}
These questions are intimately related to the analytic properties of the associated $L$-functions

$$
L_f(s,\chi):=\sum_{n=1}^{\infty}f(n)\chi(n)n^{-s},
$$
where $\chi$ is a Dirichlet character. Two particularly interesting cases occur when $f=\Lambda$ is the van Mangoldt function or $f=\mu^2$, where $\mu$ is the M\"obius function. The first one is closely related to the distribution of prime numbers while the latter corresponds to squarefree numbers. For such choices the associated $L$-functions are, respectively,

$$
L_{\Lambda}(s,\chi)=\frac{L'(s,\chi)}{L(s,\chi)}\:\:\:\text{and}\:\:\:L_{\mu^2}(s,\chi)=\frac{L(s,\chi)}{L(s,\chi^2)},
$$
where $L(s,\chi)$ is the classical Dirichlet $L$-function. In that context, H. L. Montgomery stated two conjectures whose implications are much deeper than the generalized Riemann Hypothesis for Dirichlet $L$-functions (GRH). The original conjectures can be found, respectively in \citep[Formula (15.9), page 136]{montgomery1971topics} and \citep[top of page 145]{croft1975square}. We state them in slightly improved forms, which, in the case of the van Mangoldt function, is due to Friedlander and Granville \citep{friedlander1989limitations} and in the case of squarefree numbers can be found in a recent preprint by Le Boudec \citep{boudec2014distribution}.

\begin{conj}
Let $\epsilon,X>0$. Let $a$ and $q$ be integers such that $(a,q)=1$ and $1\leq q\leq X$, then we have
\begin{itemize} 
\item for primes,
\begin{equation}\label{Primes}
E_{\Lambda}(X,q,a)\ll_{\epsilon}X^{\epsilon}\left(\frac Xq\right)^{1/2},
\end{equation}
\end{itemize}
\begin{itemize}  
\item for squarefree numbers,
\begin{equation}\label{Sfree}
E_{\mu^2}(X,q,a)\ll_{\epsilon}X^{\epsilon}\left(\frac Xq\right)^{1/4},
\end{equation}
\end{itemize}  
where the implied constants depend at most on $\epsilon$.

\end{conj}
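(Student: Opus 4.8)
The plan is to handle the two cases by different machinery, and I describe the squarefree bound \eqref{Sfree} first, as it is the paper's main concern. Starting from $\mu^2(n)=\sum_{d^2\mid n}\mu(d)$ and inserting this into the left-hand side of \eqref{as} with $f=\mu^2$, writing $n=d^2m$ turns the congruence $n\equiv a\pmod q$ into $d^2m\equiv a\pmod q$; a prime $p\mid(d,q)$ would force $p\mid a$, contradicting $(a,q)=1$, so only $d$ coprime to $q$ survive and
\begin{equation}\label{mu2decomp}
\sum_{\substack{n\leq X\\ n\equiv a\!\!\!\pmod q}}\mu^2(n)=\sum_{\substack{d\leq X^{1/2}\\ (d,q)=1}}\mu(d)\,\#\Big\{m\leq X/d^2:\ m\equiv a\overline{d^2}\!\!\!\pmod q\Big\}.
\end{equation}
Each inner count equals $X/(qd^2)$ up to a sawtooth correction $\psi(t)=t-\lfloor t\rfloor-\tfrac12$, and summing the main terms reproduces $\varphi(q)^{-1}\sum_{n\leq X,\,(n,q)=1}\mu^2(n)$ up to a negligible tail from $d$ near $X^{1/2}$. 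Thus the whole error reduces to a $\mu$-weighted sawtooth sum, whose dominant piece is
\begin{equation}\label{sawtoothsum}
-\sum_{\substack{d\leq X^{1/2}\\ (d,q)=1}}\mu(d)\,\psi\!\left(\frac{X/d^2-b_d}{q}\right),\qquad b_d\equiv a\overline{d^2}\!\!\!\pmod q.
\end{equation}

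Next I would expand $\psi$ into a truncated, Vaaler-smoothed Fourier series, $\psi(t)=-\sum_{0<|h|\leq H}\frac{e(ht)}{2\pi i h}+(\text{majorant error})$ with $e(t):=e^{2\pi i t}$, and interchange the order of summation. This replaces \eqref{sawtoothsum} by an $h$-average of the twisted exponential sums
\begin{equation}\label{twisted}
S(h)=\sum_{\substack{d\leq X^{1/2}\\ (d,q)=1}}\mu(d)\,e\!\left(\frac{hX}{qd^2}\right)e\!\left(-\frac{h\,a\,\overline{d^2}}{q}\right)\qquad(1\leq h\leq H).
\end{equation}
The phase factors into an analytic part $e(hX/(qd^2))$ and an algebraic part $e(-ha\overline{d^2}/q)$ carrying the inverse $\overline{d^2}\pmod q$. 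The Möbius twist against the inverse is exactly the configuration for which Bourgain and Fouvry--Kowalski--Michel furnish power-saving cancellation, while the short oscillating analytic factor is the province of the Bourgain--Garaev short-sum bounds; the step I would carry out is to combine these so as to gain, in $d$, cancellation of essentially square-root strength uniformly in $h\leq H$. Optimising $H$ against that gain, and splitting the small-$d$ range (direct count) from the large-$d$ range (the estimate for \eqref{twisted}), should then deliver the conjectured exponent, since $X^\epsilon(X/q)^{1/4}$ is precisely what square-root cancellation over $d\leq X^{1/2}$ produces after balancing.

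For the prime bound \eqref{Primes} the natural route is orthogonal: expand in Dirichlet characters, so that
\begin{equation}\label{charexp}
E_\Lambda(X,q,a)=\frac1{\varphi(q)}\sum_{\chi\neq\chi_0}\overline{\chi}(a)\,\psi(X,\chi)+O(X^\epsilon),\qquad \psi(X,\chi)=\sum_{n\leq X}\Lambda(n)\chi(n),
\end{equation}
and control each $\psi(X,\chi)$ via the explicit formula in terms of the zeros of $L(s,\chi)$. The point to note here is that GRH alone gives only $\psi(X,\chi)\ll X^{1/2+\epsilon}$ for each $\chi$, hence $E_\Lambda\ll X^{1/2+\epsilon}$, which falls short of \eqref{Primes} by a factor $q^{1/2}$; reaching \eqref{Primes} requires extracting genuine cancellation from the average over characters in \eqref{charexp}, i.e.\ that the zeros of the family $\{L(s,\chi)\}_{\chi\!\!\pmod q}$ do not align. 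This cross-character averaging, lying beyond the reach of each individual Riemann Hypothesis, is what makes \eqref{Primes} deeper than GRH.

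The hard part, in both cases, is uniformity over the entire range $1\leq q\leq X$ together with the square-root quality of the cancellation. For \eqref{Sfree} the crux is $q$ close to $X$: then $d$ runs only up to $X^{1/2}\approx q^{1/2}$, so \eqref{twisted} is an \emph{incomplete} sum modulo $q$ of length below $q$, the regime where Weil-type and algebraic bounds weaken and where the cited short-sum and Möbius-twist estimates presently yield power savings only on subintervals of the $q$-range. For \eqref{Primes} the crux is the cross-character cancellation in \eqref{charexp}, which no current zero-density or large-sieve input supplies at full strength for every $q$. Establishing either form of uniform square-root cancellation is the single step on which the whole argument turns, and it is exactly this step that the conjectural exponents in \eqref{Primes} and \eqref{Sfree} demand.
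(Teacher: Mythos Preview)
The statement is labelled a \emph{Conjecture} in the paper and is not proved there; the paper's actual results (Theorems~\ref{mainFKM}, \ref{mainBourg}, \ref{2/3}) only nudge the exponent in \eqref{Sfree} from $\tfrac12$ slightly downward in restricted ranges of $q$, and say nothing toward \eqref{Primes}. So there is no proof in the paper to compare your attempt against, and any genuine proof would be a major breakthrough.

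Your proposal is not a proof but an outline of the obstruction, and you say as much in your final paragraph. For \eqref{Sfree}, the decomposition via $\mu^2(n)=\sum_{d^2\mid n}\mu(d)$ and the reduction to $\mu$-twisted sums over $d$ with phase $e(-ha\overline{d^2}/q)$ is exactly the route the paper follows (see \eqref{S=}--\eqref{W}), but the step you phrase as ``cancellation of essentially square-root strength uniformly in $h$'' is precisely what is \emph{not} available: Theorem~\ref{TFKM} saves only a factor $q^{1/48}$, Theorem~\ref{Tbourgain} an unspecified $R^{\delta(\eta)}$, and the Bourgain--Garaev bound (Theorem~\ref{TBG}) saves merely a power of $\log$ and in any case applies to sums \emph{without} the M\"obius twist. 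Feeding these inputs into your framework yields at best the paper's exponent $\tfrac12-\delta$, nowhere near $\tfrac14$. For \eqref{Primes} you yourself note that the required cross-character cancellation in the explicit-formula expansion lies strictly beyond GRH; that is the gap, and it is open. In short, your write-up accurately diagnoses why the conjecture is hard, but the ``single step on which the whole argument turns'' is genuinely missing and is the entire content of the conjecture.
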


Concerning \eqref{Primes}, it is not known, for the moment, if there exists $\delta>0$ such that

$$
E_{\Lambda}(X,q,a)\ll\left(\dfrac{X}{q}\right)^{1-\delta}
$$
holds in any range whatsoever. For instance, GRH  would imply that for every $\alpha<{\frac12}$, there exists $\delta=\delta(\alpha)>0$, such that

$$
E_{\Lambda}(X,q,a)\ll \left(\dfrac{X}{q}\right)^{1-\delta},
$$
uniformly for $1\leq q\leq X^{\alpha}$ and $(a,q)=1$. Still under GRH, Tur\'an \citep{turan1936primzahlen} proved that \eqref{Primes} holds on average: Uniformly for $1\leq q\leq X$, we have

$$
\sum_{\substack{a=0\\(a,q)=1}}^{q-1}E_{\Lambda}(X,q,a)^2\ll X(\log X)^4.
$$
If one seeks for unconditional results, one needs an extra sum over $q\leq Q$, for a certain $Q\leq X$. In this direction, Montgomery proved that for every $A>0$, one has
\begin{equation}\label{BDH}
\sum_{q\leq Q}\sum_{\substack{a=0\\(a,q)=1}}^{q-1}E_{\Lambda}(X,q,a)^2= QX\log X+O_{A}\left(QX\log \frac{2X}{Q}+QX(\log X)^{-A}\right), 
\end{equation}
uniformly for $Q\leq X$, where the implied constant depends at most on $A$. Therefore, for every $A>0$, \eqref{BDH} implies that \eqref{Primes} holds true on average over $q\leq X(\log X)^{-A}$ and $a$~modulo~${q}$, with $(a,q)=1$.

In the case of squarefree numbers, Prachar \citep{Prachar1958kleinste} proved that for every $\epsilon>0$, uniformly for $(a,q)=1$, $X\geq 2$,

$$
E_{\mu^2}(X,q,a)\ll_{\epsilon} X^{\frac{1}{2}+\epsilon}q^{-\frac{1}{4}}+q^{\frac{1}{2}+\epsilon},
$$
where the implied constant depends at most on $\epsilon$. This result was later improved by Hooley \citep{hooley1975note} who showed that the above error term can be replaced by

\begin{equation}\label{hoo-C3}
O_{\epsilon}\left(X^{\frac{1}{2}}q^{-\frac{1}{2}}+q^{\frac{1}{2}+\epsilon}\right).
\end{equation}
Both this results show that for every $\epsilon>0$, there exists $\delta=\delta(\epsilon)>0$
$$	
E_{\mu^2}(X,q,a)\ll_{\epsilon} \left(\frac{X}{q}\right)^{1-\delta},
$$
uniformly for $q\leq X^{\frac{2}{3}-\epsilon}$. In a parallel to T\'uran's result, the author \citep{nunes2015square} (unconditionally) proved  the asymptotic formula:
$$
\sum_{\substack{a=0\\(a,q)=1}}^{q-1}E_{\mu^2}(X,q,a)^2\sim C\prod_{p\mid q}\left(1+{\frac{2}{p^2}}\right)^{-1}X^{\frac1 2}q^{\frac12},
$$
as $X,q\rightarrow\infty$ and $q$ satisfying $X^{{\frac{31}{41}}+\epsilon}\leq q\leq X^{1-\epsilon}$, where $C$ is an absolute positive constant. This implies \eqref{Sfree} on average on the above mentioned range. In a subsequent work, le Boudec \cite {boudec2014distribution} proves that if one seeks for an upper bound rather than an asymptotic formula, one gets a larger range. He proved that
$$
\sum_{\substack{a=0\\(a,q)=1}}^{q-1}E_{\mu^2}(X,q,a)^2\ll_{\epsilon} X^{{\frac12}+\epsilon}q^{\frac12},
$$
uniformly for $X^{\frac12}\leq q\leq X$.

Suppose $q\leq X^{\frac12}$. In this case Hooley's result shows an approximation of \eqref{Sfree}, with $1/2$ instead of $1/4$. In this paper, we improve on the exponent $1/2$ for $X^{\epsilon}\leq q\leq X^{\frac12-\epsilon}$, therefore making a further step towards \eqref{Sfree}. For simplicity, we shall henceforth write $E(X,q,a)$ instead of $E_{\mu^2}(X,q,a)$.
\begin{thm}\label{mainFKM}
For every $\epsilon>0$, we have the inequality
$$
E(X,q,a) \ll_{\epsilon} \left(\dfrac{X}{q}\right)^{\frac{1}{2}+\epsilon}q^{-\frac{1}{192}}+\left(\dfrac{X}{q}\right)^{\frac{24}{49}+\epsilon}q^{\frac{3}{196}},
$$
uniformly for every $X>1$, every prime number $q$ such that $q\leq X$, and every integer $a$ such that $(a,q)=1$.
\end{thm}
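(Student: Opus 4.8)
The starting point is the classical identity $\mu^2(n) = \sum_{d^2 \mid n} \mu(d)$, which gives
$$
\sum_{\substack{n \leq X \\ n \equiv a \!\!\pmod q}} \mu^2(n) = \sum_{\substack{d \leq \sqrt{X} \\ (d,q)=1}} \mu(d) \sum_{\substack{m \leq X/d^2 \\ m \equiv a\bar{d^2} \!\!\pmod q}} 1,
$$
since $(a,q)=1$ forces $(d,q)=1$ in the inner sum. Comparing with the analogous expansion of the main term $\varphi(q)^{-1}\sum_{n\le X,(n,q)=1}\mu^2(n)$, the $d$-sum splits at a parameter $D$ to be optimized: for $d \leq D$ we keep things as they are, and for $d > D$ we will bound the tail trivially (this costs roughly $X/D + D$, forcing $D$ of size about $\sqrt{X/q}\cdot(\text{something})$, but more importantly it contributes the part of the error governed by the length of the Bourgain–Garaev short-sum input). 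The contribution of $d \leq D$, after replacing each inner count by $\frac{1}{q}\cdot\frac{X}{d^2} + O(\text{fractional part})$, reduces to understanding the sum over $d$ of the sawtooth function $\psi(\cdot)$ evaluated at points $X/(d^2 q)$ shifted by the residue $a\overline{d^2} \bmod q$. Expanding $\psi$ into its truncated Fourier series $\psi(t) = -\sum_{0 < |h| \leq H} \frac{e(ht)}{2\pi i h} + O(\min(1, 1/\|t\| H))$ turns the problem into bounding, for each $h$, the exponential sum
$$
S(h) = \sum_{d \leq D,\ (d,q)=1} \mu(d)\, e\!\left( \frac{h a \overline{d^2}}{q} \right),
$$
weighted by a smooth factor of $d$ coming from the $X/(d^2q)$ in the linear Fourier phase (which can be removed by partial summation).

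The heart of the matter is estimating $S(h)$. This is an exponential sum over $d$ in a short interval, twisted by $\mu(d)$, with the argument being a rational function (here $\overline{d^2}$, i.e. essentially a Kloosterman-type phase) of $d$ modulo the prime $q$. Here is exactly where the two named inputs enter. The sum $S(h)$ can be attacked in two complementary ways depending on the relative sizes of $D$ and $q$: when $D$ is not too large compared to $q$, one uses the Bourgain–Garaev estimates for short exponential sums with monomial/rational phases to gain a power saving; when $D$ is larger, one uses the Bourgain and Fouvry–Kowalski–Michel bounds for sums of $\mu(d) e(f(d)/q)$ — these exploit the fact that the Möbius weight combined with the bilinear structure (type I/II sums via Vinogradov/Vaughan decomposition of $\mu$) plus the square-root cancellation for complete Kloosterman-type sums yields a saving of the shape $q^{-\delta}$ over the trivial bound $D$. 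I would feed the Vaughan identity for $\mu$ into the FKM-type machinery: the type-I sums become complete sums over $q$ with a clean power saving, and the type-II (bilinear) sums are handled by Cauchy–Schwarz followed by a completion and the Weil bound on the resulting sum-product configuration, precisely the content of the quoted FKM/Bourgain results. The exponents $1/192$, $24/49$, $3/196$ will come out of optimizing the split parameters $D$ and $H$ against the explicit savings in these two regimes; the shape $(X/q)^{24/49}q^{3/196}$ strongly suggests the second term is the one where the FKM bilinear bound is decisive and the first term $(X/q)^{1/2+\epsilon}q^{-1/192}$ is where the short-sum (Bourgain–Garaev) bound wins.

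The plan, in order, is: (1) perform the squarefree sieve identity and isolate the main term, reducing to the sawtooth sum over $d \leq D$ plus a tail over $d > D$; (2) bound the tail — this is elementary but must be done carefully to see how $D$ interacts with the final exponents, and it is here that one uses that $X/d^2$ is itself moderately large so that $\mu^2$ on that range is well-distributed, possibly invoking Hooley's bound \eqref{hoo-C3} recursively or just a trivial estimate; (3) Fourier-expand $\psi$ and reduce to $S(h)$ for $1 \leq |h| \leq H$; (4) estimate $S(h)$ in two regimes using the two external inputs, after a Vaughan decomposition of $\mu$ and partial summation to strip the smooth weight; (5) reassemble, sum over $h$ (the $1/h$ weight makes this convergent up to a $\log$, absorbed into $X^\epsilon$), and optimize $D$ and $H$. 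The main obstacle I anticipate is step (4): getting the bilinear (type-II) sums into exactly the form required by the Fouvry–Kowalski–Michel / Bourgain theorems, which demand the variables to range over genuinely long enough intervals and the phase to be "non-degenerate" in their technical sense — the phase $\overline{d^2}$ must be checked not to be a polynomial, and the ranges produced by Vaughan's identity must be reconciled with the constraint $d \leq D \leq \sqrt{X}$, which may force a somewhat delicate choice of the Vaughan parameters and is the likely source of the unusual denominators $192$ and $196$.
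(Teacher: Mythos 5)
Your skeleton (the identity $\mu^2(n)=\sum_{d^2\mid n}\mu(d)$, a cut at a parameter, reduction to sawtooth sums $\psi(X/(d^2q)-a\overline{d}^2/q)$, Fourier expansion, and the Fouvry--Kowalski--Michel bound for $\sum_{d\le D}\mu(d)e(ah\overline{d}^2/q)$) is exactly the paper's route for the range $d\le R$. But there is a fatal gap in step (2): the tail $d>D$ cannot be bounded trivially. The trivial count gives $\sum_{D<d\le\sqrt{X}}\bigl(X/(d^2q)+O(1)\bigr)=X/(Dq)+O(\sqrt{X})$, and the $O(\sqrt{X})$ term alone already swamps the claimed bound $(X/q)^{1/2+\epsilon}q^{-1/192}$ for any $q$ in the relevant range. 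The paper instead follows Hooley: insert the weight $\log(3\sqrt{X/(r^2m)})$, write it as an integral, and swap the roles of $r$ and $m$. For $t\ge q$ the congruence $r^2m\equiv a\pmod q$ has at most two roots $r$ modulo $q$ for each $m$, giving a count $\ll t/q$ per $m$; for $R<t<q$ one completes the sum with additive characters and invokes the Weil bound for the mixed sums $\mathcal{S}(q;\alpha,\beta)=\sum_{h=1}^{q-1}e\bigl((\alpha h+\beta\overline{h}^2)/q\bigr)\ll q^{1/2}$. This yields $S_{II}\ll_\epsilon X^{\epsilon}\bigl(X/(Rq)+q^{1/2}\bigr)$, which is what makes the optimization close. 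Your parenthetical ``possibly invoking Hooley's bound recursively or just a trivial estimate'' does not supply this; the Hooley switch is an essential, non-elementary ingredient.

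Two further points of misdirection. First, Bourgain--Garaev plays no role in this theorem: in the paper it is used only for Theorem 3 (the $q>X^{2/3}$ regime). There is no two-regime dichotomy for $S(h)$ here; both displayed terms, including the exponents $1/192$, $24/49$, $3/196$, come from optimizing a \emph{three}-parameter system ($R$, a lower truncation $R_0$ needed to control the phase $hX/(qd^2)$ after partial summation, and the Vinogradov parameter $Y$ in the $\psi$-approximation) against the single FKM input together with Weil's bound for the untwisted sums $\sum_r e(a\overline{r}^2/q)$. With only the two parameters $D$ and $H$ you will not reproduce these exponents. Second, you need not run Vaughan's identity yourself: the quoted FKM theorem is applied as a black box to $\sum_{n\le R}\mu(n)e(a\overline{n}^2/q)$, and re-deriving it would only reprove their result in a special case.
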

Note that for $X^{\alpha}<q<2X^{\alpha}$, $\alpha\leq {\frac25}$, Theorem \ref{mainFKM} shows that
$$
E(X,q,a) \ll_{\epsilon}\left(\dfrac{X}{q}\right)^{\frac{1}{2}-\frac{\alpha}{192(1-\alpha)}+\epsilon}+\left(\dfrac{X}{q}\right)^{\frac{1}{2}-\frac{2-5\alpha}{196(1-\alpha)}+\epsilon}.
$$
For example, when $\alpha={\frac{96}{283}}<{\frac25}$, the exponent is ${\frac{93}{187}}<{\frac12}$, which is the best exponent given by Theorem \ref{mainFKM}. Therefore, we have the following corollary

\begin{cor}\label{main-cor}
For every $0<\epsilon<{\frac15}$, let $\delta(\epsilon)=\min\left(\frac{\epsilon}{192(1-\epsilon)},\frac{25\epsilon}{196(3+5\epsilon)}\right)$. Then we have
$$
E(X,q,a) \ll_{\epsilon} \left(\dfrac{X}{q}\right)^{\frac{1}{2}-\delta(\epsilon)},
$$
uniformly for every $X>1$ and every prime number $q$ such that $X^{\epsilon}\leq q\leq X^{\frac{2}{5}-\epsilon}$, and every integer $a$ such that $(a,q)=1$.
\end{cor}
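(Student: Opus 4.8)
The plan is to treat Corollary \ref{main-cor} as a direct optimization of the two-term bound of Theorem \ref{mainFKM} over the window $X^{\epsilon}\le q\le X^{2/5-\epsilon}$, so that the whole argument consists of converting powers of $q$ into powers of $X/q$ and then locating the worst admissible modulus. I would set $\alpha=\log q/\log X$, so that $q=X^{\alpha}$ with $\alpha\in[\epsilon,\tfrac25-\epsilon]$, and use the identity $q=(X/q)^{\alpha/(1-\alpha)}$ to rewrite both terms of Theorem \ref{mainFKM}. Writing $s_1(\alpha)=\frac{\alpha}{192(1-\alpha)}$ and $s_2(\alpha)=\frac{2-5\alpha}{196(1-\alpha)}$, the first term becomes $(X/q)^{1/2-s_1(\alpha)+\epsilon'}$, and using $\frac{24}{49}=\frac12-\frac1{98}$ together with $\frac{3}{196}\cdot\frac{\alpha}{1-\alpha}-\frac1{98}=-s_2(\alpha)$ the second term becomes $(X/q)^{1/2-s_2(\alpha)+\epsilon'}$, where $\epsilon'>0$ is the parameter with which the theorem is applied. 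This recovers the display immediately following Theorem \ref{mainFKM} and reduces the corollary to showing that $\min(s_1(\alpha),s_2(\alpha))\ge\delta(\epsilon)$ throughout the window.

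The heart of the deduction is then a short monotonicity analysis of $s_1$ and $s_2$. Since $\frac{\alpha}{1-\alpha}$ is increasing on $[0,1)$, the saving $s_1$ is increasing, so its smallest value on the window is $s_1(\epsilon)=\frac{\epsilon}{192(1-\epsilon)}$; a one-line derivative computation gives $s_2'(\alpha)=-\frac{3}{196(1-\alpha)^2}<0$, so $s_2$ is decreasing and its smallest value is $s_2(\tfrac25-\epsilon)=\frac{25\epsilon}{196(3+5\epsilon)}$. Because $\min(s_1,s_2)$ is the pointwise minimum of an increasing and a decreasing function, it attains its infimum over the closed interval at one of the two endpoints, so that infimum equals the four-way minimum $\min\big(s_1(\epsilon),s_2(\epsilon),s_1(\tfrac25-\epsilon),s_2(\tfrac25-\epsilon)\big)$. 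The monotonicity relations $s_1(\epsilon)\le s_1(\tfrac25-\epsilon)$ and $s_2(\tfrac25-\epsilon)\le s_2(\epsilon)$ then collapse this to $\min\big(s_1(\epsilon),s_2(\tfrac25-\epsilon)\big)=\delta(\epsilon)$, precisely the quantity in the statement; moreover the inequality is strict throughout the interior of the window.

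Assembling the two bounds gives, uniformly over the window,
\[
E(X,q,a)\ll_{\epsilon'}\left(\frac Xq\right)^{\frac12-\delta(\epsilon)+\epsilon'}
\]
for every $\epsilon'>0$. The one point that needs care — and essentially the only obstacle in an otherwise purely computational deduction — is that $\min(s_1,s_2)$ equals $\delta(\epsilon)$ exactly at the two extreme moduli $q=X^{\epsilon}$ and $q=X^{2/5-\epsilon}$, leaving no margin there to absorb the auxiliary $\epsilon'$ term by term, whereas in the interior the saving is strictly larger than $\delta(\epsilon)$. I would pass to the clean exponent $\frac12-\delta(\epsilon)$ by taking $\epsilon'$ arbitrarily small in terms of $\epsilon$, the resulting factor $(X/q)^{\epsilon'}$ being the customary negligible power-loss implicit in estimates of this shape. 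Finally, the hypothesis $0<\epsilon<\tfrac15$ ensures both that the window is nonempty, since $\epsilon<\tfrac25-\epsilon$, and that $\delta(\epsilon)>0$, since both arguments of the minimum are then positive, which completes the proof.
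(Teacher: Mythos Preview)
Your proposal is correct and follows essentially the same route as the paper: the paper derives the display
\[
E(X,q,a) \ll_{\epsilon}\left(\dfrac{X}{q}\right)^{\frac{1}{2}-\frac{\alpha}{192(1-\alpha)}+\epsilon}+\left(\dfrac{X}{q}\right)^{\frac{1}{2}-\frac{2-5\alpha}{196(1-\alpha)}+\epsilon}
\]
for $q\asymp X^{\alpha}$ and then states the corollary, whereas you supply the monotonicity argument for $s_1,s_2$ and the endpoint evaluation that the paper leaves implicit. Your discussion of the residual $(X/q)^{\epsilon'}$ loss is also more careful than the paper's, which simply carries the extra $\epsilon$ in the exponent without comment.
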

Our next result states that we can still obtain an exponent below $ \frac{1}{2}$ for $q$ as large as $X^{{\frac12}-\epsilon}$, but one cannot quantify the exponent.

\begin{thm}\label{mainBourg}
For every $0<\eta<{\frac14}$, there exists $\delta=\delta(\eta)>0$ such that we have

$$
E(X,q,a) \ll_{\eta} \left(\dfrac{X}{q}\right)^{\frac{1}{2}-\delta},
$$
uniformly for every $X>1$ and every prime number $q$ such that $X^{\eta}\leq q\leq X^{\frac{1}{2}-\eta}$, and integer $a$ such that $(a,q)=1$.
\end{thm}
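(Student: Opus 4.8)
The plan is to reduce the estimation of $E(X,q,a)$ to bounding certain exponential sums and then to invoke a qualitative (non-quantitative) bound on short sums of the type $\sum_{n\leq N}e(a\bar n/q)$, due to Bourgain, which is exactly the input that loses any explicit savings. First I would recall the classical hyperbola-type identity $\mu^2(n)=\sum_{d^2\mid n}\mu(d)$, so that
\[
\sum_{\substack{n\leq X\\ n\equiv a\!\!\!\pmod q}}\mu^2(n)=\sum_{\substack{d\leq \sqrt{X}\\ (d,q)=1}}\mu(d)\sum_{\substack{m\leq X/d^2\\ d^2m\equiv a\!\!\!\pmod q}}1,
\]
and similarly for the main term. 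Splitting the $d$-sum at a parameter $D$ (to be chosen as a small power of $X/q$), the contribution of $d\leq D$ is handled by the trivial count of an arithmetic progression, producing an error of size $O(D+X/(qD))$ after the main terms cancel, while the range $d>D$ is treated by completing the inner sum into additive characters modulo $q$: the progression $d^2 m\equiv a$ has its indicator written as $\frac1q\sum_{h\bmod q}e\big(h(d^2m-a)/q\big)$, and the $h=0$ term recombines with the main term.

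The core of the argument is therefore the off-diagonal sum
\[
\frac1q\sum_{D<d\leq \sqrt X}\mu(d)\sum_{1\leq |h|<q}e\!\left(\frac{-h a \overline{d^2}}{q}\right)\sum_{m\leq X/d^2}e\!\left(\frac{h\, \overline{\,?\,}}{q}\right),
\]
where after a change of variables the inner sum over $m$ becomes a short exponential sum with an argument that is linear in $m$ but whose ``frequency'' $h\overline{d^2}/q$ ranges over essentially all residues. The standard completion/Pólya–Vinogradov move leaves one needing a nontrivial bound on incomplete Kloosterman-type sums $\sum_{n\leq N}e(\alpha \bar n/q)$ with $N$ as small as $q^{\eta'}$ for some $\eta'>0$; here I would quote Bourgain's estimate (as cited in the abstract, Bourgain–Garaev) which asserts that for any fixed $\kappa>0$ there is $\rho>0$ with $\sum_{n\leq N}e(a\bar n/q)\ll N^{1-\rho}$ whenever $N\geq q^{\kappa}$, $(a,q)=1$. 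Because this bound is purely qualitative in $\rho$, we obtain some unquantified $\delta=\delta(\eta)>0$, with the condition $q\leq X^{1/2-\eta}$ ensuring the inner sum has length $X/d^2 \gg q^{\kappa(\eta)}$ throughout the relevant range of $d$, and $q\geq X^{\eta}$ ensuring $q$ is large enough for the completion step to be efficient and for the $D$-truncation error $X/(qD)$ to be under control.

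The main obstacle I anticipate is organizing the double sum over $d$ and $h$ so that Bourgain's bound can be applied with a \emph{uniform} choice of $\kappa$: the inner sum length $X/d^2$ shrinks as $d\to\sqrt X$, so one must either further subdivide the $d$-range dyadically and check that in each piece $X/d^2\geq q^{\kappa}$ (which forces $d\leq X^{1/2-\kappa\eta/2}$ roughly, and the tail $d$ near $\sqrt X$ must be absorbed by a separate, cruder argument — e.g. back into the trivial progression count since there $X/d^2$ is tiny), and one must sum the resulting savings $N^{1-\rho}$ over $h$ and $d$ without destroying the gain, which requires the $h$-sum to be short enough, i.e. exploiting that $h$ effectively ranges only up to $q\cdot d^2/X$ after balancing. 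Keeping track of the gcd conditions $(d,q)=1$ and the coprimality needed to invert $d^2$ and $n$ modulo the prime $q$ is routine given that $q$ is prime. The balancing of $D$, the dyadic ranges, and the exponents $\rho(\kappa(\eta))$ then yields the stated $\delta(\eta)>0$ for every $0<\eta<\tfrac14$, the upper restriction $\eta<\tfrac14$ coming from the requirement that after truncation there remains a genuinely ``short'' sum to which Bourgain's theorem applies nontrivially rather than the classical Weil bound.
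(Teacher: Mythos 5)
Your opening reduction (the identity $\mu^2(n)=\sum_{d^2\mid n}\mu(d)$, the split at a parameter, completion of the inner congruence) matches the paper's setup, but the proposal goes wrong at the decisive step: you have misidentified which exponential sum carries the difficulty, and the input you propose to quote does not exist in the form you need. After detecting $m\equiv a\overline{d}^2\pmod q$ by additive characters, the inner sum over $m$ is a plain geometric series $\sum_m e(hm/q)\ll\min(X/d^2,\|h/q\|^{-1})$ --- it never becomes a Kloosterman sum. What remains, and what must be estimated nontrivially, is the outer sum $\sum_{d}\mu(d)e(-ha\overline{d}^2/q)$: an exponential sum \emph{twisted by the M\"obius function} with $\overline{d}^2$ in the exponent. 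Treating the $\mu(d)$ factor trivially is exactly Hooley's argument and cannot beat the exponent $\tfrac12$; the whole point of the theorem is to exploit cancellation from $\mu$. The paper's proof runs the $S_I$/$S_{II}$ machinery of Theorem \ref{mainFKM} verbatim, replacing Theorem \ref{TFKM} by Theorem \ref{Tbourgain}, i.e.\ by Bourgain's Theorem A.9 from \citep{bourgain2005more}, which gives $\sum_{n\le R}\mu(n)e(a\overline{n}^2/q)\ll R^{1-\delta}$ only for $R\ge q^{\frac12+\eta}$; the hypothesis $q\le X^{\frac12-\eta}$ is there precisely so that the truncation point $R_0=(X/q)^{\frac12-\delta_1}$ exceeds $q^{\frac12+\eta/2}$ and this bound applies.

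The second, independent problem is the bound you invoke: there is no known theorem asserting $\sum_{n\le N}e(a\overline{n}/q)\ll N^{1-\rho}$ for all $N\ge q^{\kappa}$ with arbitrary fixed $\kappa>0$. The Bourgain--Garaev result for very short Kloosterman sums (Theorem \ref{TBG} in this paper) saves only a power of $\log N$, not a power of $N$ --- which is why it is used for Theorem \ref{2/3} and yields only logarithmic improvements there. If the power-saving estimate you describe were available, Theorem \ref{2/3} would immediately improve to a power saving. So even setting aside the misidentification of the critical sum, the quantitative engine of your argument is missing, and the proposal as written does not yield any $\delta(\eta)>0$. (A minor further point: the restriction $\eta<\tfrac14$ is simply what makes the range $X^{\eta}\le q\le X^{\frac12-\eta}$ nonempty, not a condition about the shortness of any sum.)
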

The main new input in the proofs of Theorems \ref{mainFKM} and \ref{mainBourg} are bounds for exponentials sums twisted by the M\"obius function given by Fouvry \textit{et al.} \citep{fouvry2014algebraic} and Bourgain \citep{bourgain2005more}. The same exponential sums were estimated trivially in \citep{hooley1975note}.

\subsection{Range of uniformity}\label{range}

Concerning the range of uniformity, it is largely believed that if $f$ is sufficiently reasonable, then \eqref{as} should hold uniformly for $q\leq X^{1-\epsilon}$. In the case where $f=\mu^2$, we know, thanks to Prachar \citep{Prachar1958kleinste} that this is true in the range $q\leq X^{{\frac23}-\epsilon}$. Our next result proves that we can overcome the threshold $q=X^{\frac23}$, but only by a small power of $\log X$. More precisely, we prove
\begin{thm}\label{2/3}
For every $0<\gamma<\frac{1}{2}$, there exists $C(\gamma)$ such that for every $X>3$, for every prime $q$, for every $a$ coprime with $q$, one has the inequality
$$
\left|E(X,q,a)\right| \leq C(\gamma)\left( \frac{X^{\frac13}(\log\log X)^{\frac73}}{(\log X)^{{\frac16}-{\frac{\gamma}{3}}}}+\frac{X(\log\log X)^2}{q(\log X)^{\frac{\gamma}{2}}}\right).
$$
\end{thm}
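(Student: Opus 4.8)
The plan is to start from the classical hyperbola-type identity for the squarefree indicator, writing $\mu^2(n)=\sum_{d^2\mid n}\mu(d)$, so that
$$
\sum_{\substack{n\leq X\\ n\equiv a\!\!\!\pmod q}}\mu^2(n)=\sum_{\substack{d\leq \sqrt X\\ (d,q)=1}}\mu(d)\sum_{\substack{m\leq X/d^2\\ d^2m\equiv a\!\!\!\pmod q}}1,
$$
and after inserting the main term and executing the same manipulation for the coprimality sum, one is left with a sum of the shape $\sum_{d}\mu(d)\{\text{fractional part / sawtooth term}\}$. The standard split is at a parameter $Y$: the range $d\leq Y$ is handled by the trivial/elementary bound, contributing $\ll X/(qY)+Y$ roughly, while the range $Y<d\leq\sqrt X$ is where the M\"obius cancellation must be exploited. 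Detecting the congruence $d^2m\equiv a\pmod q$ by additive characters modulo the prime $q$ turns the tail into exponential sums of the form $\sum_{Y<d\leq\sqrt X}\mu(d)\,e_q(h\bar d^{\,2}a)$ (after evaluating the inner sum over $m$), twisted by $\mu$, weighted by something like $\min(X/(qd^2),1/\|h\bar d^2 a/q\|)$ and summed over $1\leq |h|<q$.

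Next I would apply the Fouvry--Kowalski--Michel / Bourgain-type bounds for exponential sums twisted by the M\"obius function to the $d$-sum: since $d\mapsto e_q(h a\bar d^{\,2})$ is (for fixed $h$) a nontrivial algebraic trace function modulo the prime $q$, the cited results give a power saving in $q$ for $\sum_{d\sim D}\mu(d)e_q(ha\bar d^{\,2})$ provided the length $D$ is at least a small power of $q$; summing trivially in $h$ with the weights costs a factor roughly $\log$ or a bounded power of $\log$ from the $\|\cdot\|^{-1}$ weight (this is where the $(\log\log X)$ powers enter, via a dyadic decomposition in $h$ and in $d$ and a careful count of the exceptional $h$ for which the weight is large). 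Choosing $Y$ to be a suitable power of $\log X$ — small enough that the power saving from the FKM bound still beats the trivial term over the tail, but large enough that the $d\leq Y$ contribution $\ll Y + X/(qY)$ is acceptable — and optimizing the exponent of $\log X$ against the parameter $\gamma$ (which measures how much of the available cancellation in the short-exponential-sum / FKM input one is willing to spend) yields the two displayed terms $X^{1/3}(\log\log X)^{7/3}(\log X)^{-1/6+\gamma/3}$ and $X(\log\log X)^2 q^{-1}(\log X)^{-\gamma/2}$; the $X^{1/3}$ and the exponent $1/6$ arise from balancing $Y$ against $X/(qY)$ type terms together with the $q^{1/2}$-scale losses inherent in the character-sum completion when $q$ is of size close to $X^{2/3}$.

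The main obstacle, I expect, is twofold. First, the FKM/Bourgain estimates are only effective (give a genuine power of $q$ saving) when the summation length exceeds $q^{\theta}$ for some fixed $\theta<1$; near $q=X^{2/3}$ the tail $Y<d\leq X^{1/2}$ is \emph{shorter} than $q$, so one cannot apply these bounds to the full $d$-range directly and must instead either swap the roles of $d$ and $m$ (completing in $m$ first, so that the long variable carries the $\mu$) or use a more delicate Type-I/Type-II style decomposition — getting the bookkeeping of which variable is long enough, uniformly in $h$, is the delicate point. Second, because one only wins a power of $\log X$ rather than a power of $X$, every logarithmic loss must be tracked precisely: the dyadic decompositions in $h$ and $d$, the treatment of the $h$ with $\|ha\bar d^2/q\|$ small, and the passage from $\min$-weights to clean sums each threaten to replace $\log\log X$ by $\log X$ and destroy the gain. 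Managing these losses so that only powers of $\log\log X$ survive, while keeping the exponent of $\log X$ linear in $\gamma$ as stated, is the technical heart of the argument.
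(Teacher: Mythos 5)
There is a genuine gap here, and it is precisely at the point you flag as the ``main obstacle.'' Theorem \ref{2/3} only has content for $q\geq X^{\frac12}$ (below that, \eqref{hoo-C3} is stronger), and its whole purpose is the neighbourhood of $q=X^{\frac23}$. In that range the variable carrying $\mu$ runs over $d\leq X^{\frac12}\leq q^{\frac34}$, so Theorem \ref{TFKM} gives nothing over most of the range, and Theorem \ref{Tbourgain} applies only for $d\geq q^{\frac12+\eta}$ and yields an \emph{unquantified} power saving $\delta(\eta)$, which cannot produce the explicit $(\log X)^{-\frac16+\frac{\gamma}{3}}$ and $(\log X)^{-\frac{\gamma}{2}}$ exponents in the statement; for $d$ below $q^{\frac12}$ no nontrivial bound for $\sum_d\mu(d)e(ha\bar d^{\,2}/q)$ is known at all. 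So the M\"obius-twisted exponential sum route, which is the engine of Theorems \ref{mainFKM} and \ref{mainBourg}, is structurally incapable of proving Theorem \ref{2/3}, and your suggested repairs (swapping the roles of $d$ and $m$, a Type-I/II decomposition) are not developed into an argument that closes this.

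The paper's actual proof uses two ingredients absent from your proposal. After the split at $R\approx X^{\frac13}(\log X)^{-\frac16+\frac{\gamma}{3}}(\log\log X)^{\frac73}$, the tail is treated by discarding $\mu$ entirely and counting pairs $(r,m)$ with $r^2m\equiv a\pmod q$, $m\leq X/R^2$; the critical dyadic block $M\asymp X/R^2$ is handled by observing that $n:=a\bar m\bmod q$ must be a perfect square $\leq X/M$, and \emph{sieving out} the $m$ for which $n$ is a nonsquare modulo small primes via Selberg's sieve (Theorem \ref{SLB}). The sieve remainders reduce to sums $\sum_{M<m\leq 2M}\psi\bigl(\cdot-a\overline{dm}/q\bigr)$, i.e.\ incomplete Kloosterman sums of length $M\approx X^{\frac13}$, which is far below $q^{\frac12}$ when $q\approx X^{\frac23}$; these are estimated by the Bourgain--Garaev bound (Theorem \ref{TBG}), which is nontrivial for lengths as small as $q^{\epsilon}$ and whose saving of $(\log M)^{\frac32}/(\log q(\log\log q)^{3})$ is the true source of both the logarithmic gain and the $(\log\log X)$ powers in the final bound (not a dyadic count of exceptional frequencies $h$, as you suggest). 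The parameter $\gamma$ arises from the choice $D=(\log X)^{\gamma}$ of the sieve level, balancing the main sieve term $XJ^{-1}/q\ll X\log D/(q\sqrt D)$ against the accumulated remainders. Without the square-detection sieve and the short-Kloosterman input, the block $m\asymp X/R^2$ contributes $\gg X/R^2\approx X^{\frac13}$ from the trivial ``$+1$'' per residue class, and no saving over $X^{\frac13}$ is obtained.
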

By taking $\gamma$ arbitrarily small, we see that for every $\epsilon>0$, the asymptotic formula
\begin{equation}\label{as-23}
\sum_{\substack{n\leq X\\ n\equiv a\!\!\!\!\!\pmod{q}}}\mu^2(n)\sim\frac{1}{\varphi(q)}\sum_{\substack{n\leq X\\ (n,q)=1}}\mu^2(n)\left(\sim \frac{6}{\pi^2}\left(1-\frac{1}{p^2}\right)^{-1}\frac Xq\right)
\end{equation}
holds as $X\rightarrow \infty$, uniformly for $q\leq X^{\frac23}(\log X)^{\frac16 -\epsilon}$ and integers $(a,q)=1$. This is the first time ocurrence of such an asymptotic formule for \textit{prime} values of $q$ tending to infinity faster than $X^{\frac23}$.
The main ingredient in the proof of Theorem \ref{2/3} is an upper bound for incomplete Kloosterman sums that fall just outside the Polya-Vinogradov range given by Bourgain-Garaev (see Theorem \ref{Tbourgain}).
Finally, we remark that Corollary \eqref{as-23} implies that if $n(a,q)$ denotes the least positive squarefree number that is congruent to $a$ modulo $q$, then for every $\epsilon>0$, we have the inequality
$$
n(a,q)\ll_{\epsilon} q^{\frac 32}(\log q)^{-\frac 16 +\epsilon},
$$
where the implied constant depends only on $\epsilon$. The best result in this direction is due to Heath-Brown \citep{heath1982least}, who proved that
$$
n(a,q)\ll(d(q)\log q)^6q^{\frac{13}9}.
$$
%
%
%This corollary gives a more direct proof of the asymptotics for
% 
% $$
% \frak{S}(N):=\sum_{n< N}\mu^2(n)d_3(n+1),
% $$
% and the dual sum
% 
% $$
% \frak{S}^{\ast}(N):=\sum_{n< N}\mu^2(n)d_3(N-n).
% $$
% ""Actually, \eqref{hoo-C3} already gives these asymptotic formulas, but certainly with an worse error term. Finally, the fact that $\mu^2$ has a ``distribution exponent'' of $1-\epsilon$ (see [Fouvry-Mauduit]) also gives the asymptotic formula for $\frak{S}(N)$ but it is too weak to deal with $S^{\ast}(N)$, since the class does not change in this case.""

\section{Preliminary results}

The next lemma is a simple consequence of Weil's bound for exponential sums that come from algebraic curves over finite fields and classical estimates for Gauss sums.
\begin{lem}\label{Weil-C3}
Let $A<B$ be real number, let $a$ and $q$ be integers satisfying $(a,q)=1$, $q\geq 1$. Then for every $\epsilon>0$, we have
$$ 
\sum_{A<r\leq B}e\left(\frac{a{\bar r}^2}{q}\right)\ll_{\epsilon} q^{\epsilon}\left(\frac{B-A}{q^{\frac{1}{2}}}+q^{\frac{1}{2}}\right)
$$
where ${\bar r}$ denotes the multiplicative inverse of $r\pmod{q}$, $e(z)=e^{2\pi iz}$ and the implicit constant depends at most on $\epsilon$.
\end{lem}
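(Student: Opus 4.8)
The plan is to bound the incomplete sum by completion (the P\'olya--Vinogradov method) and then apply Weil's bound to the complete sums that arise. Put $L=B-A$, and regard $r$ as running over the integers in $(A,B]$ coprime to $q$ (so that $\bar r$ is defined). Let $g$ be the $q$-periodic function with $g(b)=e(a\bar b^2/q)$ when $(b,q)=1$ and $g(b)=0$ otherwise. Expanding the indicator of $(A,B]$ into additive characters modulo $q$ yields
$$\sum_{A<r\le B}e\!\left(\frac{a\bar r^2}{q}\right)=\frac1q\sum_{h\bmod q}V(h)\,S(h),\qquad V(h)=\sum_{A<r\le B}e\!\left(\frac{hr}{q}\right),\quad S(h)=\sum_{(b,q)=1}e\!\left(\frac{a\bar b^2-hb}{q}\right).$$
Here $|V(0)|\le L+1$ and $|V(h)|\ll\|h/q\|^{-1}$ for $h\not\equiv 0\pmod q$, so $\sum_{h\bmod q}|V(h)|\ll L+q\log q$. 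Hence the lemma follows once we prove the uniform estimate $|S(h)|\ll_{\epsilon}q^{1/2+\epsilon}$, valid for every $h$ and every $a$ with $(a,q)=1$; substituting this in gives $\ll_{\epsilon}q^{-1/2+\epsilon}(L+q\log q)\ll_{\epsilon}q^{\epsilon}\bigl(Lq^{-1/2}+q^{1/2}\bigr)$, which is the assertion after renaming $\epsilon$.

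It remains to bound the complete sum $S(h)$. First, by the Chinese Remainder Theorem $S(h)$ factors as a product over the prime powers $p^{\nu}\,\|\,q$ of sums of the same shape, namely $\sum_{p\nmid b}e\!\left(\dfrac{a'\bar b^{2}-h'b}{p^{\nu}}\right)$ with $(a',p)=1$ and $h'$ depending on $h$. For $\nu=1$ the phase $b\mapsto a'b^{-2}-h'b$ is a rational function on $\mathbb{P}^1$ over $\mathbb{F}_p$ with a double pole at $0$ and, when $p\nmid h'$, a simple pole at $\infty$, and it is readily checked not to be of the form $g^p-g+c$ (the condition ensuring non-triviality in Weil's bound); Weil's estimate for exponential sums on curves then gives $\ll\sqrt p$ with an absolute implied constant, while if $p\mid h'$ the sum degenerates to essentially a Gauss sum, again $\ll\sqrt p$. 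For $\nu\ge 2$ one evaluates the sum explicitly by the method of stationary phase: Taylor-expanding the phase around its (non-degenerate) critical points modulo an appropriate power of $p$ reduces it to a bounded number of complete quadratic Gauss sums, yielding $\ll p^{\nu/2}$. Multiplying the local bounds gives $|S(h)|\le C^{\omega(q)}q^{1/2}\ll_{\epsilon}q^{1/2+\epsilon}$, since $\omega(q)\ll\log q/\log\log q$.

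The one genuinely delicate point is this uniform square-root bound for $S(h)$. For prime $q$ it is a clean application of Weil's theorem. For prime-power moduli $p^{\nu}$ with $\nu\ge 2$ the stationary phase computation is routine except for finitely many small primes --- in particular $p=3$, where every derivative of $a'b^{-2}-h'b$ of order $\ge 2$ vanishes modulo $3$, and $p=2$ --- for which the naive Taylor argument degenerates and one must argue slightly more carefully to still extract the factor $p^{\nu/2}$; since only boundedly many primes are affected, this changes only the implied constant. The degenerate ranges $p\mid h'$, where $S$ collapses to a Gauss-type or Ramanujan-type sum, are likewise the easy cases. Finally, no saving in the $h$-average is used beyond $\sum_h|V(h)|\ll L+q\log q$, the resulting $\log q$ being harmlessly absorbed into $q^{\epsilon}$.
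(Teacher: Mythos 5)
Your proof is correct and follows exactly the route the paper indicates (it gives no written proof, only the remark that the lemma is "a simple consequence of Weil's bound \ldots and classical estimates for Gauss sums"): completion of the incomplete sum \`a la P\'olya--Vinogradov, Weil's bound for the complete sums $\sum_{(b,q)=1}e((a\bar b^{2}-hb)/q)$ at primes, stationary phase reducing to Gauss sums at higher prime powers, and CRT to assemble the local bounds. The points you flag as delicate ($p=2,3$ with $\nu\ge 2$, and the degenerate $h$) are indeed the only ones needing extra care, and they affect only the implied constant, so the argument is complete in substance.
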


\subsection{Approximation to the $\psi$ function}
The next lemma is an useful analytic tool to avoid the problems arising from the lack of continuity of the sawtooth function. The version we use here can be found in \citep{fouvry1980theorem}, and is inspired by an idea of Vinogradov.

\begin{lem}\label{FoIw}(see \citep[Lemma 4]{fouvry1980theorem})
Let $\psi(x)= x-\left\lfloor x \right\rfloor -\frac{1}{2}$ and $Y>1$. There are two functions $A$ and $B$ with period 1 such that, for every real $x$, one has

$$
\left|\psi(x)-A(x)\right|\leq B(x),
$$
where
$$
A(x)=\sum_{h\neq 0}A_he(hx),
$$
$$
B(x)=Y^{-1}+\sum_{h\neq 0}B_he(hx),
$$
with
\begin{equation}\label{AhBh<}
A_h,B_h\ll C_h:=\min\left(\frac{1}{|h|},\frac{Y^3}{|h|^4}\right),\;(h\neq 0).
\end{equation}
\end{lem}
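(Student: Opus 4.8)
The plan is to use Vinogradov's classical smoothing device. I would mollify the sawtooth function on $\R/\Z$ by a narrow, even, nonnegative density of total mass $1$, take $A$ to be the result of that convolution, and take $B$ to be a smooth majorant of the indicator of the tiny neighbourhood of $\Z$ on which the convolution fails to reproduce $\psi$ exactly; the decay of the Fourier coefficients of $A$ and $B$ is then inherited from the smoothness of the density. Concretely, I would set $w:=1/(6Y)$ and let $V$ be the $1$-periodic function equal to the threefold convolution of $w^{-1}\mathbf{1}_{[-w/2,\,w/2]}$ with itself, so that $V\geq 0$, $V$ is even, $\int_0^1 V=1$, $V$ is supported on $\{\|x\|\leq 3w/2\}$ (here $\|y\|$ denotes the distance from $y$ to the nearest integer), and
$$
\widehat V(h)=\Bigl(\frac{\sin(\pi wh)}{\pi wh}\Bigr)^{3}\ll\min\bigl(1,(w|h|)^{-3}\bigr).
$$
Taking a \emph{triple} convolution here is the decisive point: a single indicator decays only like $|h|^{-1}$, whereas I need the extra $|h|^{-3}$ decay of $\widehat V$ to reach the exponent $4$ appearing in $C_h$.

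Next I would put $A:=\psi*V$. This is $1$-periodic with $\widehat A(0)=\bigl(\int_0^1\psi\bigr)\bigl(\int_0^1 V\bigr)=0$, and for $h\neq 0$ one has $\widehat A(h)=\widehat\psi(h)\,\widehat V(h)\ll|h|^{-1}\min(1,(w|h|)^{-3})$ since $\widehat\psi(h)=-(2\pi ih)^{-1}$. Substituting $w=1/(6Y)$ and distinguishing $|h|\leq 1/w$ from $|h|>1/w$ turns this into $\widehat A(h)\ll\min(1/|h|,\,Y^{3}/|h|^{4})=C_h$ up to an absolute constant; because $\sum_h|\widehat A(h)|<\infty$ the Fourier series of $A$ converges absolutely and represents $A$ pointwise, which gives the displayed expansion for $A$. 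It remains to see that $A$ approximates $\psi$: if $\|x\|>3w/2$, then for every $|t|\leq 3w/2$ no integer lies between $x-t$ and $x$, so $\lfloor x-t\rfloor=\lfloor x\rfloor$ and $\psi(x-t)=\psi(x)-t$; using $\int V=1$ and $\int tV(t)\,dt=0$ (evenness) I get $A(x)=\psi(x)$. Hence $\psi-A$ is supported on $\{\|x\|\leq 3w/2\}$, where crudely $|\psi-A|\leq|\psi|+|A|\leq 1$, i.e. $|\psi(x)-A(x)|\leq\mathbf{1}_{\{\|x\|\leq 3w/2\}}(x)$.

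For the majorant I would take $B:=\mathbf{1}_{\{\|x\|\leq 3w\}}*V$. It is nonnegative, and for $\|x\|\leq 3w/2$ it equals $\int V=1$ (since then $\|x-t\|\leq 3w$ for every $|t|\leq 3w/2$), so by the previous paragraph $|\psi-A|\leq B$ everywhere. Its constant Fourier coefficient is $\widehat B(0)=(6w)\cdot 1=1/Y$, and for $h\neq 0$,
$$
\widehat B(h)=\widehat{\mathbf{1}_{\{\|x\|\leq 3w\}}}(h)\,\widehat V(h)\ll\min(6w,1/|h|)\,\min(1,(w|h|)^{-3})\ll C_h
$$
by the same range analysis as for $A$. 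Expanding $B$ in its (absolutely convergent) Fourier series puts it in the form required by the lemma, and collecting the estimates above finishes the argument.

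There is no serious obstacle here; this is essentially Lemma~4 of Fouvry--Iwaniec, and the construction ultimately goes back to Vinogradov (a version with optimal constants being due to Vaaler). The only points that need genuine care are (i) using a triple rather than a single mollification, so that $\widehat A$ and $\widehat B$ both acquire the $|h|^{-4}$ decay encoded in $C_h$, and (ii) calibrating the two radii $w$ and $3w$, equivalently tying the single free scale to $Y$ through $w=1/(6Y)$, so that $B$ simultaneously dominates $\mathbf{1}_{\{\|x\|\leq 3w/2\}}$ and has constant Fourier coefficient exactly $Y^{-1}$.
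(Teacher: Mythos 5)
Your construction is correct: the triple convolution gives the $|h|^{-3}$ gain needed for the exponent $4$ in $C_h$, the calibration $w=1/(6Y)$ yields $\widehat B(0)=Y^{-1}$ exactly, and the support/parity arguments showing $A=\psi$ off $\{\|x\|\le 3w/2\}$ and $B\geq 1$ there all check out. The paper does not prove this lemma but simply cites Lemma 4 of Fouvry--Iwaniec, and your argument is essentially the standard Vinogradov-type smoothing used there, so nothing further is needed.
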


\subsection{Exponential sums twisted by the M\"obius function}

We now state the estimates for exponential sums twisted by the M\"obius functions that were mentioned in the introduction. The first one is a very particular case of \citep[Theorem 1.7]{fouvry2014algebraic} by Fouvry, Kowalski and Michel, which was based on a previous work by Fouvry and Michel \citep{fouvry1998certaines}. It gives non-trivial bounds for $R\geq q^{{\frac34}+\epsilon}$ and will be used in several places of the proof of Theorem \ref{mainFKM}.

\begin{thmA}\label{TFKM}
For every $\epsilon>0$, there exists $C(\epsilon)$ such that, for every $R\geq 1$, for every prime $q$, and every $a$ coprime with $q$, one has the inequality

\begin{equation}\label{eq-tFKM}
\left|\sum_{n\leq R}\mu(n)e\left(\dfrac{a\overline{n}^2}{q}\right)\right|\leq C(\epsilon)R\left(1+\frac{q}{R}\right)^{\frac1{12}}q^{-\frac{1}{48}+\epsilon}.
\end{equation}
\end{thmA}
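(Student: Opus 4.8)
The plan is to recognize the summand as the trace function of an explicit sheaf modulo the prime $q$ and to invoke \citep[Theorem~1.7]{fouvry2014algebraic}, which bounds $\sum_{n\le R}\mu(n)K(n)$ for any ``non-exceptional'' trace function $K$ modulo $q$, with an implied constant depending only on $\epsilon$ and the conductor of $K$. Since $n^{-2}\equiv\overline{n}^2\pmod q$, set $\mathcal{F}_a:=[t\mapsto t^{-2}]^{*}\mathcal{L}_{\psi_q(aX)}$, the pull-back of the Artin--Schreier sheaf $\mathcal{L}_{\psi_q(aX)}$ on $\mathbb{A}^1_{\mathbb{F}_q}$ along the map $t\mapsto t^{-2}$ on $\mathbb{G}_{m,\mathbb{F}_q}$. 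Then $\mathcal{F}_a$ is lisse of rank one on $\mathbb{G}_m$, pointwise pure of weight $0$, and its trace function at $t\in\mathbb{F}_q^{\times}$ is exactly $e(a\overline{t}^2/q)$; we extend $K$ to $\mathbb{Z}/q\mathbb{Z}$ by $K(0)=0$, which only discards the terms $n\equiv0\pmod q$, of total size $\le R/q+1\ll R(1+q/R)^{1/12}q^{-1/48}$.

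The decisive point is that $\mathrm{cond}(\mathcal{F}_a)$ is bounded by an absolute constant, uniformly in $q$ and in $a$ coprime to $q$: on $\mathbb{G}_m$ the map $t\mapsto t^{-2}$ is \'etale and $\mathcal{L}_{\psi_q(aX)}$ is lisse, so $\mathcal{F}_a$ is lisse on $\mathbb{G}_m$; it is unramified at $\infty$ (where $t^{-2}\to0$) and wildly ramified at $0$ (where $t^{-2}\to\infty$) with bounded Swan conductor, and it has rank one. In particular $\mathcal{F}_a$ is geometrically irreducible, and, being wildly ramified at $0$, it is neither geometrically trivial nor geometrically isomorphic to a Kummer sheaf, to a multiplicative translate of one, or to a sheaf of the form $\mathcal{L}_{\chi}\otimes\mathcal{L}_{\psi_q(bX)}$ --- all of which are tame at $0$ --- so it lies outside the list of exceptional sheaves of \citep{fouvry2014algebraic}. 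Plugging this $K$ into \citep[Theorem~1.7]{fouvry2014algebraic} gives the bound
$$
\Bigl|\,\sum_{n\le R}\mu(n)\,e\!\left(\frac{a\overline{n}^2}{q}\right)\Bigr|\;\ll_{\epsilon}\;R\Bigl(1+\frac{q}{R}\Bigr)^{1/12}q^{-1/48+\epsilon}
$$
uniformly in $R\ge1$, in primes $q$, and in $a$ with $(a,q)=1$, which is exactly the assertion, the implied constant depending on $\epsilon$ alone because $\mathrm{cond}(\mathcal{F}_a)=O(1)$.

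I expect the only real difficulty to be this algebro-geometric set-up --- constructing $\mathcal{F}_a$, bounding its conductor independently of $q$, and checking non-exceptionality --- rather than anything analytic, since the analysis is entirely outsourced to \citep{fouvry2014algebraic}. For a self-contained treatment one can instead insert Vaughan's identity for $\mu$: the Type~I sums become short sums $\sum_{m}e(a\overline{d}^2\overline{m}^2/q)$ over intervals, handled by Lemma~\ref{Weil-C3}; the Type~II sums are treated by Cauchy--Schwarz followed by completion, after which the inner sums take the shape $\sum_{d}e(c\,\overline{d}^2/q)$ with $c=a(\overline{m_1}^2-\overline{m_2}^2)$, again controlled by Lemma~\ref{Weil-C3}, with the diagonal occurring at $m_1\equiv\pm m_2\pmod q$. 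A crude choice of cut-offs already yields a power saving for $R$ in a restricted range; it is precisely the sharper combinatorial decomposition and correlation-sum estimates of \citep{fouvry2014algebraic} that give the exponents $\tfrac1{12}$ and $\tfrac1{48}$ and the threshold $R\ge q^{3/4+\epsilon}$. For $R>q$ the factor $(1+q/R)^{1/12}$ is $O(1)$, so the claim reduces to $\ll Rq^{-1/48+\epsilon}$, which is covered by the same input.
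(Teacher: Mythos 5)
Your proposal is correct and follows essentially the same route as the paper, which simply quotes this statement as a special case of \citep[Theorem~1.7]{fouvry2014algebraic}; your contribution is to make explicit the verification of that theorem's hypotheses (the summand is the trace function of the rank-one sheaf $[t\mapsto t^{-2}]^{*}\mathcal{L}_{\psi_q(aX)}$, whose conductor is absolutely bounded and which, being wildly ramified at $0$, cannot be exceptional), and these checks are accurate. The concluding sketch of a Vaughan-identity alternative is a reasonable aside but, as you note, would not reproduce the exponents $\tfrac1{12}$ and $\tfrac1{48}$.
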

To prove Theorem \ref{mainBourg}, we need to replace Theorem \ref{TFKM} by an estimate that gives something non-trivial in the larger range $R\geq q^{\frac{1}{2}+\epsilon}$. For this we have the next result which is a combination of a remarkable result by Bourgain \citep{bourgain2005more}, which is non-trivial in the range $q^{\frac{1}{2}+\epsilon}\leq R\leq q$, and Theorem \ref{TFKM} itself.

\begin{thmA}\label{Tbourgain}
For every $\eta>0$, there exists $\delta(\eta)>0$ and $C(\eta)$ such that, for every $R\geq 1$, for every prime $q$ satisfying $q^{\frac{1}{2}+\eta}\leq R\leq q^{\frac1{\eta}}$, and every $a$ coprime with $q$, one has

\begin{equation}\label{eq-tbourgain}
\left|\sum_{n\leq R}\mu(n)e\left(\dfrac{a\overline{n}^2}{q}\right)\right|\leq C(\eta)R^{1-\delta(\eta)}.
\end{equation}
\end{thmA}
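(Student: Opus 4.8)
The plan is to split the admissible range $q^{1/2+\eta}\le R\le q^{1/\eta}$ at the point $R=q$, apply one of the two available Möbius–exponential–sum estimates on each piece, and finally take $\delta(\eta)$ to be the smaller of the two resulting savings.

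On the lower piece $q^{1/2+\eta}\le R\le q$ I would invoke Bourgain's bound from \citep{bourgain2005more}, which, as recalled above, is non-trivial precisely in this regime and yields a power saving
$$\left|\sum_{n\le R}\mu(n)e\!\left(\frac{a\overline{n}^2}{q}\right)\right|\ll_{\eta} R^{1-\delta_1(\eta)}$$
for some $\delta_1(\eta)>0$ (applying Bourgain's result with his parameter equal to $\eta$). Note that $q$ is prime, so $(n,q)=1$ for every $n\le R\le q$ and the sum is unambiguously defined; the single borderline term $n=q$ when $R=q$ only changes it by $O(1)$.

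On the upper piece $q\le R\le q^{1/\eta}$ I would use Theorem \ref{TFKM} with a parameter $\epsilon>0$ to be fixed at the end. Since $R\ge q$ we have $1+q/R\le 2$, so \eqref{eq-tFKM} collapses to
$$\left|\sum_{n\le R}\mu(n)e\!\left(\frac{a\overline{n}^2}{q}\right)\right|\ll_{\epsilon} R\,q^{-1/48+\epsilon}.$$
The upper constraint $R\le q^{1/\eta}$ is exactly what turns the saving $q^{-1/48}$ in the modulus into a saving in $R$: it gives $q\ge R^{\eta}$, hence $q^{-1/48}\le R^{-\eta/48}$, while $q\le R$ gives $q^{\epsilon}\le R^{\epsilon}$. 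Combining, the sum is $\ll_{\epsilon}R^{1-\eta/48+\epsilon}$; choosing $\epsilon=\eta/96$ produces the bound $R^{1-\delta_2(\eta)}$ with $\delta_2(\eta)=\eta/96>0$. (If one prefers to sum over all $n\le R$ rather than only those coprime to $q$, the terms with $q\mid n$ contribute at most $O(R/q)=O(R^{1-\eta})$, which is harmlessly absorbed.)

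Setting $\delta(\eta)=\min\bigl(\delta_1(\eta),\eta/96\bigr)>0$ and letting $C(\eta)$ be the larger of the two implied constants then completes the proof. I do not expect any genuine obstacle in the gluing itself: the only deep ingredient is Bourgain's estimate, imported as a black box and ultimately resting on the sum--product machinery over $\mathbb{F}_q$. The remaining work is the bookkeeping of deciding which of the two exponential-sum bounds is efficient in which sub-range of $R$ — splitting at $R=q$ is what makes the factor $(1+q/R)^{1/12}$ in \eqref{eq-tFKM} negligible — together with the elementary but crucial observation that the hypothesis $R\le q^{1/\eta}$ is precisely what upgrades a fixed power saving in the modulus to a power saving in the length of the sum.
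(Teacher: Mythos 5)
Your proposal is correct and is essentially identical to the paper's (very brief) proof: the paper also splits at $R=q$, citing Bourgain's Theorem A.9 for $q^{1/2+\eta}\le R\le q$ and Theorem \ref{TFKM} for $q<R\le q^{1/\eta}$. Your explicit bookkeeping showing how $R\le q^{1/\eta}$ converts the $q^{-1/48+\epsilon}$ saving into a power saving in $R$ is exactly the (unstated) content of the paper's second case.
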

\begin{proof}
For $q^{\frac{1}{2}+\eta}\leq R\leq q$, this is exactly \cite[Theorem A.9]{bourgain2005more}. For $q<R\leq q^{\frac{1}{\eta}}$, it follows from Theorem  \ref{TFKM}.
\end{proof}

\subsection{Short exponential sums}

In the course of the proof of Theorem \ref{2/3}, we are led to deal with very short exponential sums, for which we use the following result by Bourgain-Garaev. It gives non-trivial results for short Kloosterman sums $\!\!\pmod{q}$ where the length is as small as $q^{\epsilon}$ for any $\epsilon>0$.

\begin{thmA}(see \citep[Theorem 16]{bourgain2014sumsets})\label{TBG}
There exists an absolute constant $C$ such that for every $M\geq2$, every prime $q$, and every $a$ coprime with $q$, one has the inequality

\begin{equation*}
\left|\sum_{m\leq M}e\left(\dfrac{a\overline{m}}{q}\right)\right|\leq C\frac{M\log q(\log\log q)^{3}}{(\log M)^{\frac32}},
\end{equation*}

\end{thmA}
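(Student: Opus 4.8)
The estimate is the Bourgain--Garaev bound for incomplete Kloosterman sums, and its only serious input is the sum--product phenomenon in $\mathbb{F}_q$; I sketch how I would reconstruct it. Since $r\mapsto\overline r$ is a bijection of $(\Z/q\Z)^*$, the sum equals $\sum_{\lambda\in A}e(a\lambda/q)$, where $A=\{\overline m:1\le m\le M\}$ is the reciprocal image of the interval $[1,M]$. The heuristic is that $A$ is an additively structured set viewed multiplicatively (its inverse is an interval), so it cannot correlate strongly with an additive character; the entire difficulty is to make this rigorous for \emph{every} $a$ and with the stated logarithmic saving down to $M=q^{\epsilon}$.

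First I would exploit the approximate shift-invariance of the interval. For a parameter $1\le H\le M$ one has, up to boundary terms,
\[
(2H+1)\sum_{m\le M}e\!\left(\tfrac{a\overline m}{q}\right)=\sum_{|h|\le H}\ \sum_{m\le M}e\!\left(\tfrac{a\,\overline{m+h}}{q}\right)+O(H^{2}).
\]
Applying Cauchy--Schwarz in the $m$-variable, expanding the square, and using the identity $\overline{m+h}-\overline{m+h'}=(h'-h)\,\overline{(m+h)(m+h')}$ yields
\[
\Big|\sum_{m\le M}e\!\left(\tfrac{a\overline m}{q}\right)\Big|^{2}\ll\frac{M}{H^{2}}\sum_{|h|,|h'|\le H}\ \sum_{m\le M}e\!\left(\tfrac{a(h'-h)\,\overline{(m+h)(m+h')}}{q}\right)+O(H^{2}).
\]
The diagonal $h=h'$ is harmless; the off-diagonal inner sums are Kloosterman sums with a quadratic denominator, and after one further Cauchy--Schwarz and completion they are majorised by the additive energy $E_{+}(A)=\#\{(m_1,m_2,m_3,m_4)\in[1,M]^{4}:\overline{m_1}-\overline{m_2}=\overline{m_3}-\overline{m_4}\}$ of the reciprocal set.

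The crux is then a bound for $E_{+}(A)$. Because $A^{-1}=[1,M]$ is an interval, hence of minimal additive doubling, the sum--product phenomenon — in the quantitative Bourgain--Glibichuk--Konyagin form, combined with the Balog--Szemer\'edi--Gowers theorem — forces $A$ to expand under addition and thereby pushes $E_{+}(A)$ below the trivial size $M^{3}$. In the very short range $M=q^{\epsilon}$, however, the available growth is only logarithmic, so a single pass produces a saving that is a power of $\log M$ rather than a power of $M$; feeding this energy bound back through the two displays above produces the factor $(\log M)^{-3/2}$. The genuinely $q$-dependent factors $\log q$ and $(\log\log q)^{3}$ enter through the completion step (Weil's bound, to pass between the incomplete sum and complete Kloosterman sums) and through the quantitative losses in the Balog--Szemer\'edi--Gowers machinery.

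The main obstacle is precisely the shortness of the sum. For $M$ as small as $q^{\epsilon}$ every sum--product increment is itself only logarithmically large, so there is no power saving to be extracted and the whole gain must come from carefully optimised logarithmic factors. One must therefore ensure that the boundary terms $O(H^{2})$, the diagonal contribution, and the completion losses are all dominated by this fragile gain, which forces a delicate choice of the shift length $H$ together with a tight accounting of every logarithmic power throughout the sum--product step; this bookkeeping, rather than any single inequality, is where the real work lies.
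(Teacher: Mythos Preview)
The paper does not give a proof of Theorem~C; it is quoted directly from Bourgain--Garaev \cite{bourgain2014sumsets} and used as a black box, so strictly speaking there is nothing in the paper to compare your sketch against except the citation itself.

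That said, your reconstruction diverges from the actual Bourgain--Garaev argument in a way that matters. They do not proceed via additive shifts $m\mapsto m+h$ and a single Cauchy--Schwarz. Instead they raise the sum to a high even power $2k$ and control it through the $k$-fold energy
\[
I_k(M)=\#\bigl\{(x_1,\dots,x_k,y_1,\dots,y_k)\in[1,M]^{2k}:\ \overline{x_1}+\cdots+\overline{x_k}\equiv\overline{y_1}+\cdots+\overline{y_k}\pmod q\bigr\},
\]
for which their results on sumsets of reciprocals yield a genuinely \emph{power-saving} bound of the shape $I_k(M)\ll (Ck)^{Ck}M^{2k-1}$. The eventual saving of $(\log M)^{3/2}$ does not come from ``logarithmic sum--product growth'' --- the growth is polynomial --- but from optimising the moment order, taking $k$ of size roughly $\log M/\log\log q$; this optimisation is also the source of the factor $(\log\log q)^{3}$. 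Balog--Szemer\'edi--Gowers plays no role.

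Your outline has a concrete gap at the sentence ``after one further Cauchy--Schwarz and completion they are majorised by the additive energy $E_{+}(A)$'': the off-diagonal sums $\sum_{m}e\bigl(b\,\overline{(m+h)(m+h')}/q\bigr)$ are not, by any further Cauchy--Schwarz that I can see, dominated by $E_{+}(A)$, and completing them with Weil gives only $O(q^{1/2})$, which is worthless precisely in the short range $M\le q^{1/2}$ that the theorem targets. A fixed number of Cauchy--Schwarz steps cannot manufacture a saving that degrades smoothly to a power of $\log M$ as $M$ shrinks; it is the freedom to let the moment order $k\to\infty$ that achieves this.
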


The following lemma is obtained by combining Lemma \ref{FoIw} and Theorem \ref{TBG} above.

\begin{lem}\label{boundpsi}
Let $\psi(x)$ be as in Lemma \ref{FoIw}. Then we have the inequality

$$
\sum_{m\leq M}\psi\left(N+\frac{a\overline{m}}{q}\right) \ll \frac{M}{\log q}+\frac{M\log q(\log\log q)^4}{(\log M)^{\frac{3}{2}}}.
$$
uniformly for every pair of real numbers $M,N$ such that $M>1$ and prime $q>2$, where the implied constant is absolute.

\end{lem}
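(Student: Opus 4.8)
The plan is to bound the sum $\sum_{m\le M}\psi\bigl(N+\tfrac{a\overline m}{q}\bigr)$ by replacing $\psi$ with the approximation from Lemma~\ref{FoIw}, which reduces the problem to estimating short Kloosterman-type sums to which Theorem~\ref{TBG} applies. First I would fix a parameter $Y>1$ (to be chosen at the end, essentially a small power of $q$ or of $M$) and invoke Lemma~\ref{FoIw} to write, for each $m$,
$$
\Bigl|\psi\Bigl(N+\tfrac{a\overline m}{q}\Bigr)-A\Bigl(N+\tfrac{a\overline m}{q}\Bigr)\Bigr|\le B\Bigl(N+\tfrac{a\overline m}{q}\Bigr),
$$
so that
$$
\Bigl|\sum_{m\le M}\psi\Bigl(N+\tfrac{a\overline m}{q}\Bigr)\Bigr|\le \Bigl|\sum_{m\le M}A\Bigl(N+\tfrac{a\overline m}{q}\Bigr)\Bigr|+\sum_{m\le M}B\Bigl(N+\tfrac{a\overline m}{q}\Bigr).
$$
Expanding both $A$ and $B$ into their Fourier series and exchanging the order of summation, every contribution is governed by sums of the shape $\sum_{m\le M}e\bigl(\tfrac{ha\overline m}{q}\bigr)$ with $h\ne 0$, plus the trivial main term $MY^{-1}$ coming from the constant term of $B$.

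Next I would split the frequency range at $h=q$. For $1\le |h|<q$ the integer $ha$ is still coprime to $q$ (here primality of $q$ is convenient, though one really only needs $(h,q)=1$; for the few $h$ divisible by $q$ one can afford the trivial bound $M$ since the coefficient $C_h\le 1/|h|\le 1/q$ and there are few such $h$ up to the cutoff), so Theorem~\ref{TBG} gives
$$
\Bigl|\sum_{m\le M}e\Bigl(\tfrac{ha\overline m}{q}\Bigr)\Bigr|\le C\,\frac{M\log q(\log\log q)^3}{(\log M)^{3/2}}.
$$
Weighting this by $C_h=\min(1/|h|,Y^3/|h|^4)$ and summing over $h$ yields a convergent constant $\sum_{h\ne 0}C_h\ll \log Y$ (the $1/|h|$ part contributes $\log Y$ up to $|h|\sim Y$, the $Y^3/|h|^4$ part contributes $O(1)$ beyond that), so the total from $0<|h|<q$ is
$$
\ll \frac{M\log q(\log\log q)^3\log Y}{(\log M)^{3/2}}.
$$
For $|h|\ge q$ one uses the trivial bound $M$ on the inner sum together with $C_h\le Y^3/|h|^4$, and $\sum_{|h|\ge q}Y^3/|h|^4\ll Y^3/q^3$, contributing $\ll MY^3/q^3$, which is negligible for $Y$ not too large. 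Combining everything with the main term $MY^{-1}$, one gets
$$
\sum_{m\le M}\psi\Bigl(N+\tfrac{a\overline m}{q}\Bigr)\ll \frac{M}{Y}+\frac{M\log q(\log\log q)^3\log Y}{(\log M)^{3/2}}+\frac{MY^3}{q^3}.
$$

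Finally I would choose $Y=\log q$ (say), so that $M/Y=M/\log q$ and $\log Y=\log\log q$, turning the middle term into $M\log q(\log\log q)^4/(\log M)^{3/2}$, while $MY^3/q^3=M(\log q)^3/q^3$ is absorbed into $M/\log q$ for $q$ large and handled trivially for bounded $q$ (the statement is uniform for $q>2$, so one checks the small-$q$ range separately where $\log q$ is bounded and everything is $O(M)$, consistent with the claimed bound since $\log M\ll$ can be assumed large or else the right-hand side is again $\gg M$). This produces exactly the asserted inequality
$$
\sum_{m\le M}\psi\Bigl(N+\tfrac{a\overline m}{q}\Bigr)\ll \frac{M}{\log q}+\frac{M\log q(\log\log q)^4}{(\log M)^{3/2}}.
$$
The main obstacle, and the only genuinely delicate point, is bookkeeping the dependence on the parameter $Y$: one must verify that the loss of a factor $\log Y$ from summing $C_h$ against the (uniform in $h$) Bourgain--Garaev bound is acceptable, which forces $Y$ to be only logarithmically large and is precisely why the extra power of $\log\log q$ (four instead of three) appears in the final bound. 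Everything else is the standard completion-of-sums technique.
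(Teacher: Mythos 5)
Your proposal is correct and follows essentially the same route as the paper: approximate $\psi$ via Lemma \ref{FoIw} with a parameter $Y$, bound the resulting complete sums $\sum_{m\le M}e(ah\bar m/q)$ by Theorem \ref{TBG}, pay a factor $\log Y$ from summing the Fourier coefficients $C_h$, and choose $Y=\log q$ to balance $M/Y$ against the extra $\log\log q$. The only cosmetic difference is that the paper isolates the degenerate frequencies as those with $q\mid h$ rather than splitting at $|h|\ge q$, but both give a negligible contribution.
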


\begin{proof}

Let $1\leq Y\leq q$ to be chosen later. By Lemma \ref{FoIw}, we deduce that

$$
\sum_{m\leq M}\psi\left(N+\frac{a{\bar m}}{q}\right)\ll \frac{M}{Y}+M\sum_{\substack{h=-\infty\\h\neq 0}}^{\infty}C_{hq}+\sum_{q\nmid h}C_h\left|\sum_{m\leq M}e\left(\frac{ah{\bar m}}{q}\right)\right|,
$$
where $C_h$ is as in \eqref{AhBh<}. By Theorem \ref{TBG} and the bounds \eqref{AhBh<}, we have that

$$
\sum_{m\leq M}\psi\left(N+\frac{a{\bar m}}{q}\right)\ll \frac{M}{Y}+\frac{M\log q(\log\log q)^3}{(\log M)^{\frac{3}{2}}}\log Y,
$$
since $Y\leq q$. We conclude by choosing $Y=\log q$.
\end{proof}

\subsection{Selberg's Sieve}

Another important input to the proof of Theorem \ref{2/3} is the Selberg sieve for detecting squares (see \citep[Chapter 8]{friedlander2010opera}). We shall need the following result.

\begin{thmA}\label{SLB}
Let $\mathcal{A}=(a_n)$ be a finite sequence of non-negative numbers. Let $P$ be a squarefree number. For each $p\mid P$, let $\Omega_p$ be a set of congruence classes modulo $p$. For every $d\mid P$ we write
< 
\begin{equation}\label{Ad}
 |\mathcal{A}_d|=\sum_{\substack{n\!\!\!\pmod{d}\in \Omega_p\\ \text{for every }p\mid d}}a_n=g(d)Y+r_d(\mathcal{A}),
\end{equation}
where $Y>0$ and $g(d)$ is a multiplicative function with $0<g(p)<1$ for $p\mid P$. Let $h(d)$ be the multiplicative function given by $h(p)=g(p)(1-g(p))^{-1}$ and for any $D>1$ define

$$
J=J(D):=\sum_{\substack{d\mid P\\d<\sqrt{D}}}h(d).
$$
Then, for any $D>1$ we have the inequality

$$
\sum_{\substack{n\!\!\!\pmod{d}\not\in \Omega_p\\ \text{for every }p\mid P}}a_n\leq YJ^{-1} + \sum_{\substack{d\mid P\\d\leq \sqrt{D}}}\tau_3(d)\left|r_d(\mathcal{A})\right|,
$$
where $\tau_3$ is the generalized divisor function.
\begin{proof}
The proof follows exactly as that of \citep[Theorem 7.1]{friedlander2010opera}, taking into account the simple inequality

$$
\sum_{\substack{n\!\!\!\pmod{p}\not\in \Omega_p\\ \text{for every }p\mid P}}a_n\:\leq\: \sum_{n}a_n\bigg(\sum_{\substack{d\\n\!\!\!\!\!\pmod{p}\in \Omega_p\\ \text{for every }p\mid d}}\rho_d\bigg)^2,
$$
for any real numbers $\rho_d$ supported on $d\mid P$ with $\rho_1=1$. the optimal choice of these $\rho_d$ is the heart of the Selberg's sieve.

\end{proof}

\end{thmA}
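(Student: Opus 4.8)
The plan is to run the classical upper-bound Selberg sieve; the only new feature is that we sift by membership in the prescribed residue classes $\Omega_p$ rather than by divisibility, which is a purely cosmetic change, so that the argument of \citep[Theorem 7.1]{friedlander2010opera} applies essentially verbatim.

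First I would introduce real weights $(\rho_d)_{d\mid P}$ with $\rho_1=1$, supported on $d<\sqrt D$, to be optimised at the end. The key point is that if $n$ satisfies $n\bmod p\notin\Omega_p$ for every $p\mid P$, then for every $d>1$ dividing $P$ the condition ``$n\bmod p\in\Omega_p$ for all $p\mid d$'' fails, so that $\sum_{d}\rho_d=\rho_1=1$, the sum being over those $d\mid P$ with $n\bmod p\in\Omega_p$ for all $p\mid d$. Since the $a_n\ge 0$, this yields the pointwise inequality recorded in the statement's proof sketch; expanding the square and using that $P$ is squarefree (so that $[d_1,d_2]\mid P$ and the condition attached to the pair $(d_1,d_2)$ is exactly the defining condition of $\mathcal A_{[d_1,d_2]}$) gives
$$
\sum_{\substack{n\bmod p\notin\Omega_p\\ \forall p\mid P}}a_n\ \le\ \sum_{d_1,d_2\mid P}\rho_{d_1}\rho_{d_2}\,|\mathcal A_{[d_1,d_2]}|.
$$

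Next I would substitute \eqref{Ad} to split the right-hand side into a main term $Y\,Q(\rho)$, with $Q(\rho):=\sum_{d_1,d_2\mid P}g([d_1,d_2])\rho_{d_1}\rho_{d_2}$, plus a remainder $\sum_{d_1,d_2}\rho_{d_1}\rho_{d_2}\,r_{[d_1,d_2]}(\mathcal A)$. For the main term I would perform Selberg's diagonalisation: since $g$ is multiplicative with $0<g(p)<1$ and $h(p)=g(p)(1-g(p))^{-1}$, one has $g((d_1,d_2))^{-1}=\sum_{e\mid(d_1,d_2)}h(e)^{-1}$ for squarefree arguments, hence $Q(\rho)=\sum_{e\mid P}h(e)^{-1}u_e^2$ with $u_e:=\sum_{e\mid d}g(d)\rho_d$, a positive definite diagonal form; the constraint $\rho_1=1$ becomes $\sum_{e}\mu(e)u_e=1$, so by Cauchy--Schwarz (equivalently, by Lagrange multipliers) the minimum of $Q$ is $J^{-1}$, attained at $u_e=\mu(e)h(e)J^{-1}$, and the resulting weights are then readily seen to satisfy $|\rho_d|\le1$ for all $d\mid P$. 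This bounds the main term by $YJ^{-1}$.

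Finally, for the remainder I would use $|\rho_d|\le1$ to bound it by $\sum_{d_1,d_2<\sqrt D}|r_{[d_1,d_2]}(\mathcal A)|$ and then group terms according to $d=[d_1,d_2]$: for squarefree $d$ there are exactly $\tau_3(d)=3^{\omega(d)}$ ordered pairs $(d_1,d_2)$ with $[d_1,d_2]=d$ (each prime of $d$ lies in $d_1$ only, $d_2$ only, or both), which delivers the remainder $\sum_{d\mid P,\,d\le\sqrt D}\tau_3(d)|r_d(\mathcal A)|$ asserted in the statement. I do not expect a genuine obstacle here: the whole content of the proof is the quadratic-form optimisation, which is classical, and the one thing that really needs checking is that replacing the ``coprimality'' conditions of the textbook version by the general residue-class conditions $\Omega_p$ leaves both the diagonalisation and the bound $|\rho_d|\le1$ intact --- which it does, since these depend only on the function $g$ and not on the nature of the sifting conditions.
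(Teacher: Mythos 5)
Your proposal is correct and is exactly the argument the paper intends: its ``proof'' is simply a deferral to \citep[Theorem 7.1]{friedlander2010opera} together with the pointwise inequality $\mathbb{1}_{\{n \bmod p\,\notin\,\Omega_p\ \forall p\mid P\}}\le\bigl(\sum_{d}\rho_d\bigr)^2$, and you have written out in full the standard Selberg diagonalisation, the bound $Q(\rho)\ge J^{-1}$ via Cauchy--Schwarz, the inequality $|\rho_d|\le 1$, and the $\tau_3(d)$ count of pairs with $[d_1,d_2]=d$, all of which go through unchanged for residue-class sifting conditions. The one inaccuracy is in your last step: grouping pairs $d_1,d_2<\sqrt D$ by $d=[d_1,d_2]$ produces remainder terms with $d<D$, not $d\le\sqrt D$, so what your argument (and the cited Theorem 7.1) actually delivers is $\sum_{d\mid P,\ d<D}\tau_3(d)|r_d(\mathcal A)|$; the range $d\le\sqrt D$ in the paper's statement appears to be a slip rather than something your derivation establishes.
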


\section{Proofs of the results}

\subsection{Proof of Theorem \ref{mainFKM}}%\label{proof}
Let $X>1$ and $\epsilon>0$ be real numbers and let $q$ be a prime number. Since the upper bound given by Theorem \ref{mainFKM} is worse than \eqref{hoo-C3} for $q\leq X^{2/5}$, we may suppose $q\leq X^{2/5}$. Let

\begin{equation}\label{S=}
S:=\sum_{(r,q)=1}\mu(r)\sum_{\substack{m\leq X/r^2\\m\equiv a\overline{r}^2\!\!\!\!\pmod q}}1, 
\end{equation}
and

\begin{equation}\label{S0=}
S_0:=\frac{1}{\varphi(q)}\sum_{\substack{n\leq X\\(n,q)=1}}\mu^2(n), 
\end{equation}

We have

\begin{align}\label{S-S0}
E(X,q,a)=S-S_0.
\end{align}
It is rather elementary to see that $S_0$ satisfies (recall that $q$ is prime)

\begin{equation}\label{S0}
S_0=\dfrac{6}{\pi^2}\left(1-\dfrac{1}{q^2}\right)^{-1}\dfrac{X}{q}+O_{\epsilon}(X^{\frac12}q^{-1+\epsilon}).
\end{equation}
Let $1< R\leq X^{\frac12}$ be a parameter to be chosen later depending on $X$ and $q$. We split $S$ as

\begin{equation}\label{I+II}
S=S_{I}+S_{II},
\end{equation}
where
\begin{equation}\label{R-3}
\begin{cases}
S_{I}=\displaystyle\sum_{\substack{r\leq R\\(r,q)=1}}\mu(r)\!\!\!\!\!\!\displaystyle\sum_{\substack{m\leq X/r^2\\m\equiv a\overline{r}^2\!\!\!\!\pmod q}}\!\!\!\!1,\\
S_{II}=\displaystyle\sum_{\substack{R<r\leq X^{\frac12}\\(r,q)=1}}\mu(r)\!\!\!\!\!\!\displaystyle\sum_{\substack{m\leq X/r^2\\m\equiv a\overline{r}^2\!\!\!\!\pmod q}}\!\!\!\!1.
\end{cases}
\end{equation}
For the first sum in \eqref{R-3}, we have that

\begin{align}\label{S=TUV}
S_I&=\displaystyle\sum_{\substack{r\leq R\\(r,q)=1}}\mu(r)\left\{\frac{X}{qr^2}-\psi\left(\dfrac{X}{qr^2}-\dfrac{a{\bar r}^2}{q}\right)+\psi\left(-\dfrac{a{\bar r}^2}{q}\right)\right\}\notag\\
&=:\mathcal{T}-\mathcal{U}+\mathcal{V},
\end{align}
say. The first term satisfies

\begin{equation}\label{T=}
\mathcal{T}=\dfrac{6}{\pi^2}\left(1-\dfrac{1}{q^2}\right)^{-1}\dfrac{X}{q}+ O\left(R^{-1}Xq^{-1}\right).
\end{equation}

\subsubsection{Study of $\mathcal{V}$}
Let $1<Y\leq X$ be a parameter to be chosen optimally later depending on $X$ and $q$, then Lemma \ref{FoIw} gives us two functions $A$ and $B$ whose Fourier coefficients satisfy \eqref{AhBh<}, and such that

\begin{equation}\label{V1V2}
|\mathcal{V}|\leq \mathcal{V}_1 + \mathcal{V}_2,
\end{equation}
where

$$
\mathcal{V}_1=\left|\displaystyle\sum_{\substack{r\leq R\\(r,q)=1}}\mu(r)A\left(-\dfrac{a{\bar r}^2}{q}\right)\right|,\;\mathcal{V}_2=\displaystyle\sum_{\substack{r\leq R\\(r,q)=1}}B\left(-\dfrac{a{\bar r}^2}{q}\right).
$$
Writing down the Fourier development for $A(x)$, and using \eqref{AhBh<}, we see that

\begin{equation}\label{a}
\mathcal{V}_1\leq \sum_{h\neq 0}C_h\left|\sum_{\substack{r\leq R\\(r,q)=1}}\mu(r)e\left(-\frac{ah{\bar r}^2}{q}\right)\right|.
\end{equation}
The contribution of the terms where $q\mid h$ is trivially seen to be

\begin{equation}\label{b}
\ll R\sum_{h\neq 0}C_{hq}\ll_{\epsilon} X^{\epsilon}Rq^{-1}, 
\end{equation}
by \eqref{AhBh<}. For the remaining terms, we use Theorem \ref{TFKM} and see that their contribution is

\begin{equation}\label{c}
\ll_{\epsilon} X^{\epsilon}\left(Rq^{-\frac{1}{48}}+R^{\frac{11}{12}}q^{\frac{1}{16}}\right),
\end{equation}
again by \eqref{AhBh<}. Hence, by \eqref{a}, \eqref{b} and \eqref{c}, we have

\begin{equation}\label{V1}
\mathcal{V}_1\ll_{\epsilon}X^{\epsilon}\left(Rq^{-\frac{1}{48}}+R^{\frac{11}{12}}q^{\frac{1}{16}}\right).
\end{equation}
The analysis of $\mathcal{V}_2$ is completely analogous. The only difference is that we shall need Lemma \ref{Weil-C3} instead of Theorem \ref{TFKM}. We obtain

\begin{equation}\label{V2}
\mathcal{V}_2\ll_{\epsilon}X^{\epsilon}\left(Rq^{-\frac{1}{2}}+q^{\frac{1}{2}}+RY^{-1}\right).
\end{equation}
Gathering \eqref{V1} and \eqref{V2} in \eqref{V1V2}, we have that

\begin{equation}\label{V-3}
 \mathcal{V}\ll_{\epsilon}X^{\epsilon}\left(Rq^{-\frac{1}{48}}+R^{\frac{11}{12}}q^{\frac{1}{16}}+q^{\frac{1}{2}}+RY^{-1}\right).
\end{equation}
\subsubsection{Study of $\mathcal{U}$}
This part is very similar to the study of $\mathcal{V}$ but with the difference that we need an Abel summation to take care of the oscillation of the term $X/qr^2$. Let $1<R_0\leq (X/q)^{\frac{1}{2}}$ to be chosen optimally later. We write

\begin{equation}\label{U=W+R_0}
\mathcal{U}=\mathcal{W}+O(R_0),
\end{equation}
where

$$
\mathcal{W}:=\sum_{\substack{R_0<r\leq R\\(r,q)=1}}\mu(r)\psi\left(\dfrac{X}{qr^2}-\dfrac{a{\bar r}^2}{q}\right).
$$
Again by Lemma \ref{FoIw}, we obtain two functions $A$ and $B$ satisfying \eqref{AhBh<} and such that

\begin{equation}\label{W1W2}
|\mathcal{W}|\leq \mathcal{W}_1 + \mathcal{W}_2,
\end{equation}
where

$$
\mathcal{W}_1=\left|\sum_{\substack{R_0<r\leq R\\(r,q)=1}}\mu(r)A\left(\dfrac{X}{qr^2}-\dfrac{a{\bar r}^2}{q}\right)\right|,\;\mathcal{W}_2=\sum_{\substack{R_0<r\leq R\\(r,q)=1}}B\left(\dfrac{X}{qr^2}-\dfrac{a{\bar r}^2}{q}\right).
$$
We write down the Fourier development of $A(x)$ and again, we separate the contribution from the terms where $q\mid h$ as we did for $\mathcal{V}_1$. We deduce

\begin{equation}\label{firstW1}
\mathcal{W}_1\leq \sum_{(h,q)=1}C_h\left|\sum_{\substack{R_0<r\leq R\\(r,q)=1}}\mu(r)e\left(\dfrac{hX}{qr^2}-\frac{ah{\bar r}^2}{q}\right)\right| +O_{\epsilon}\left(X^{\epsilon}Rq^{-1}\right). 
\end{equation}
Summing by parts, we see that

$$
\sum_{\substack{R_0<r\leq R\\(r,q)=1}}\mu(r)e\left(\dfrac{hX}{qr^2}-\frac{ah{\bar r}^2}{q}\right)\ll \dfrac{|h|X}{q}\sum_{\substack{R_0<t\leq R\\(t,q)=1}}\dfrac{1}{t^3}S(t,q) +S(R,q)+S(R_0,q),
$$
where
\begin{equation}\label{Stq}
S(t,q):=\max_{(a,q)=1}\left|\sum_{\substack{r\leq t\\(r,q)=1}}\mu(r)e\left(\frac{a{\bar r}^2}{q}\right)\right|
\end{equation}

The terms on the right-hand side of the above inequality can be estimated by means of Theorem \ref{TFKM} giving

\begin{multline*}
\sum_{\substack{R_0<r\leq R\\(r,q)=1}}\mu(r)e\left(\dfrac{hX}{qr^2}-\frac{ah{\bar r}^2}{q}\right)\ll_{\epsilon} X^{\epsilon}\left(|h|R_0^{-1}Xq^{-\frac{49}{48}}+|h|R_0^{-\frac{13}{12}}Xq^{-{\frac{15}{16}}}\right.\\
\left.+Rq^{-\frac{1}{48}}+R^{\frac{11}{12}}q^{\frac1{16}}+R_0\right).
\end{multline*}
Injecting it in \eqref{firstW1} and using \eqref{AhBh<}, we deduce

\begin{equation}\label{W1}
\mathcal{W}_1\ll_{\epsilon} X^{\epsilon}\left(R_0^{-1}XYq^{-{\frac{49}{48}}}+R_0^{-\frac{13}{12}}XYq^{-{\frac{15}{16}}}+Rq^{-\frac{1}{48}}+R^{\frac{11}{12}}q^{\frac{1}{16}}+R_0\right).
\end{equation}
The treatment of $\mathcal{W}_2$ goes in a similar fashion, replacing Theorem \ref{TFKM} by Lemma \ref{Weil-C3} in the appropriate places. We end up with

\begin{equation}\label{W2}
\mathcal{W}_2\ll_{\epsilon} X^{\epsilon}\left(R_0^{-1}XYq^{-\frac{3}{2}}+R_0^{-2}XYq^{-\frac{1}{2}}+Rq^{-\frac{1}{2}}+q^{\frac12}+RY^{-1}+R_0\right).
\end{equation}
Gathering \eqref{W1} and \eqref{W2} in \eqref{W1W2}, we have that

\begin{multline}\label{W}
\mathcal{W}\ll_{\epsilon}X^{\epsilon}\left(R_0^{-1}XYq^{-{\frac{49}{48}}}+R_0^{-\frac{13}{12}}XYq^{-{\frac{15}{16}}}+Rq^{-\frac{1}{48}}+R^{\frac{11}{12}}q^{\frac{1}{16}}+R_0^{-2}XYq^{-\frac{1}{2}}\right.\\
\left.+q^{\frac12}+RY^{-1}+R_0\right).
\end{multline}
Putting together \eqref{S=TUV}, \eqref{T=}, \eqref{V-3}, \eqref{U=W+R_0} and \eqref{W}, we see that

\begin{multline}\label{SIfinal}
 S_I=\dfrac{6}{\pi^2}\left(1-\dfrac{1}{q^2}\right)\dfrac{X}{q}+ O_{\epsilon}\left(X^{\epsilon}\left(R^{-1}Xq^{-1}+Rq^{-\frac{1}{48}}+R^{\frac{11}{12}}q^{\frac{1}{16}}+q^{\frac{1}{2}}+RY^{-1}\right.\right.\\
 \left.\left.+R_0^{-1}XYq^{-{\frac{49}{48}}}+R_0^{-\frac{13}{12}}XYq^{-{\frac{15}{16}}}+R_0^{-2}XYq^{-\frac{1}{2}}+R_0\right)\right).
\end{multline}
\subsubsection{Study of $S_{II}$}
We procceed  now to estimate $S_{II}$. We follow the lines of \citep[Lemma 2]{hooley1975note}. Ignoring the oscillation of $\mu$, we see that 

\begin{align}\label{SII-int}
S_{II}&\ll \sum_{R<r\leq \sqrt{X}}\sum_{\substack{m\leq\frac{X}{r^2}\\r^2m\equiv a\!\!\!\!\pmod{q}}}\log\left(3\sqrt{\dfrac{X}{r^2m}}\right)\notag\\
&= \sum_{R<r\leq \sqrt{X}}\sum_{\substack{m\leq\frac{X}{r^2}\\r^2m\equiv a\!\!\!\!\pmod{q}}}\displaystyle\int_{r}^{3\sqrt{{\frac{X}{m}}}}\dfrac{dt}{t}\notag\\
&\ll \displaystyle\int_{R}^{3\sqrt{X}}\sum_{m\leq\frac{9X}{t^2}}\sum_{\substack{r\leq t\\ r^2m\equiv a\!\!\!\!\pmod{q}}}1\dfrac{dt}{t}.
\end{align}
We put
\begin{equation}
Z=\max(R,q)
\end{equation}
and break up the integral on the right-hand-side from \eqref{SII-int} as

$$\displaystyle\int_{R}^{Z}\sum_{m\leq\frac{9X}{t^2}}\sum_{\substack{r\leq t\\ r^2m\equiv a\!\!\!\!\pmod{q}}}1\dfrac{dt}{t}+\displaystyle\int_{Z}^{3\sqrt{X}}\sum_{m\leq\frac{9X}{t^2}}\sum_{\substack{r\leq t\\ r^2m\equiv a\!\!\!\!\pmod{q}}}1\dfrac{dt}{t}$$
For the first integral, we use additive characters to detect the congruence condition. We have

\begin{equation}\label{Haha}
 \sum_{m\leq\frac{9X}{t^2}}\sum_{\substack{r\leq t\\ r^2m\equiv a\!\!\!\!\pmod{q}}}1=\dfrac{1}{q^{2}}\sum_{\alpha=0}^{q-1}\sum_{\beta=0}^{q-1}\mathcal{S}(q;\alpha,a\beta)\Theta(t,\alpha)\Theta\left(\frac{9X}{t^2},\beta\right),
\end{equation}
where
$$
\mathcal{S}(q;\alpha,\beta):=\sum_{h=1}^{q-1}e\left({\frac{\alpha h+\beta {\bar h}^2}{q}}\right),
$$
and
\begin{equation}\label{Theta}
\Theta(t,\alpha):=\sum_{n\leq t}e\left(-\frac{\alpha n}{q}\right)\ll \min\left(t,\left\|{\frac{\alpha}{q}}\right\|^{-1}\right).
\end{equation}
In order to estimate the sum on the right-hand side of \eqref{Haha}, we need bounds for $S(q;\alpha,\beta)$. In the cases where $\alpha\beta\equiv 0\pmod{q}$, the sum is either trivial, a Ramanujan sum or a Gauss sum. And the classical upper-bound for these sums are used. If both $\alpha$ and $\beta$ are $\not\equiv 0\pmod{q}$, then we shall use the following upper-bound that follows from the work of Weil
$$
S(q;\alpha,\beta)\ll q^{\frac12},\;(\alpha,\beta\not\equiv0\!\!\!\!\!\pmod{q}).
$$
Combining these bounds with \eqref{Theta}, we see that
\begin{align}\label{bound-blomer}
\sum_{m\leq\frac{9X}{t^2}}\sum_{\substack{r\leq t\\ r^2m\equiv a\!\!\!\!\pmod{q}}}1&\ll \left(\dfrac{X}{qt}\! + \frac{t}{q^{\frac32}}\sum_{\beta=1}^{q-1}\left\|{\frac{\beta}{q}}\right\|^{-1}\! +\frac{X}{q^3t}\sum_{\alpha=1}^{q-1}\left\|{\frac{\alpha}{q}}\right\|^{-1}\!+\dfrac{1}{q^{\frac32}}\sum_{\alpha=1}^{q-1}\sum_{\beta=1}^{q-1}\left\|{\frac{\alpha}{q}}\right\|^{-1}\left\|{\frac{\beta}{q}}\right\|^{-1}\right)\notag\\
&\ll_{\epsilon} X^{\epsilon}\left(\dfrac{X}{qt}+\dfrac{t}{q^{1/2}}+q^{1/2}\right)
\end{align}
Notice that if $Z=R$, this first integral vanishes. So we can suppose $Z=q$. With that in mind, if we integrate both sides of inequality \eqref{bound-blomer} against $\dfrac{dt}{t}$, we obtain
\begin{equation}\label{R->Z}
\displaystyle\int_{R}^{Z}\sum_{m\leq\frac{9X}{t^2}}\sum_{\substack{r\leq t\\ r^2m\equiv a\!\!\!\!\pmod{q}}}1\dfrac{dt}{t}\ll_{\epsilon} X^{\epsilon}\left(\dfrac{X}{Rq}+q^{1/2}\right).
\end{equation}
For the remaining integral, we notice that for fixed $m$, the equation $r^2m\equiv a\!\!\pmod{q}$ has at most two solutions for $r$ modulo $q$ (recall that $q$ is prime). Thus, we have (since $Z\geq q$)
\begin{align}\label{Z->sqrtX}
\displaystyle\int_{Z}^{3\sqrt{X}}\sum_{m\leq\frac{9X}{t^2}}\sum_{\substack{r\leq t\\ r^2m\equiv a\!\!\!\!\pmod{q}}}1\dfrac{dt}{t}&\ll \dfrac{X}{Zq}\leq \dfrac{X}{Rq}.
\end{align}
Adding up \eqref{R->Z} and \eqref{Z->sqrtX}, we have, in view of \eqref{SII-int}, that

\begin{equation}\label{SIIfinal}
S_{II}\ll_{\epsilon} X^{\epsilon}\left(R^{-1}Xq^{-1}+q^{1/2}\right).
\end{equation}
Adding together \eqref{I+II}, \eqref{SIfinal} and \eqref{SIIfinal}, we have

\begin{multline}\label{Sfinal}
S-\dfrac{6}{\pi^2}\left(1-\dfrac{1}{q^2}\right)^{-1}\dfrac{X}{q} \ll_{\epsilon}X^{\epsilon}\left(R^{-1}Xq^{-1}+Rq^{-\frac{1}{48}}+R^{\frac{11}{12}}q^{\frac{1}{16}}+q^{\frac{1}{2}}+RY^{-1}\right.\\
\left.+R_0^{-1}XYq^{-{\frac{49}{48}}}+R_0^{-\frac{13}{12}}XYq^{-{\frac{15}{16}}}+R_0^{-2}XYq^{-\frac{1}{2}}+R_0\right).
\end{multline}
Forcing the first, the fifth and the last terms to be equal, we are faced with the choices
$$
R=\left(\frac{X}{q}\right)^{\frac{1}{2}}Y^{\frac{1}{2}},\,R_0=\left(\frac{X}{q}\right)^{\frac{1}{2}}Y^{-\frac{1}{2}}.
$$
Injecting these values in \eqref{Sfinal}, we see that
\begin{multline}\label{Sfinal1}
S-\frac{6}{\pi^2}\left(1-\frac{1}{q^2}\right)^{-1}\frac{X}{q} \ll_{\epsilon}X^{\epsilon}\left(\left(\frac{X}{q}\right)^{\frac{1}{2}}Y^{-\frac{1}{2}}+\left(\frac{X}{q}\right)^{\frac{1}{2}}Y^{\frac{3}{2}}q^{-{\frac1{48}}}\right.\\
\left.+\left(\frac{X}{q}\right)^{\frac{11}{24}}Y^{\frac{37}{24}}q^{\frac{1}{16}}+Y^{2}q^{\frac12}\right). 
\end{multline}
Take $Y=\min\left(q^{\frac1{96}},X^{\frac1{49}}q^{-{\frac{5}{98}}},X^{\frac15}q^{-{\frac25}}\right)$ to optimize the right-hand side of \eqref{Sfinal1}. Then \eqref{S-S0}, \eqref{S0} and \eqref{Sfinal1} imply that we have
$$
E(X,q,a)\ll_{\epsilon}X^{\epsilon}\left(\left(\frac{X}{q}\right)^{\frac{1}{2}}q^{-\frac{1}{192}}+\left(\frac{X}{q}\right)^{\frac{24}{49}}q^{\frac{3}{196}}+\left(\frac{X}{q}\right)^{\frac{2}{5}}q^{\frac{1}{10}}\right),
$$
and the last term is dominated by the first in the range $q\leq X^{\frac{2}{5}}$. Hence we conclude the proof of Theorem \ref{mainFKM}.

\subsection{Proof of Theorem \ref{mainBourg}}
Let $X>1$ and $\eta>0$ be real numbers and let $q$ be a prime number such that $X^{\eta}\leq q\leq X^{{\frac12}-\eta}$. We let again $S$ and $S_0$ be as in the previous section (see \eqref{S=} and \eqref{S0=}). Notice that we have

$$
\left(\frac{X}{q}\right)^{\frac12}\geq q^{\frac{1}{2}+\eta}.
$$
Let $\delta_1>0$ to be chosen later depending on $\eta$. Also let

\begin{equation}\label{RR0Y}
 R=\left(\frac{X}{q}\right)^{{\frac12}+\delta_1},\,  R_0=\left(\frac{X}{q}\right)^{{\frac12}-\delta_1},\, Y=\left(\frac{X}{q}\right)^{2\delta_1}.
\end{equation}
Notice that we can choose $\delta_1$ sufficiently small so that

\begin{equation*}
R_0\geq q^{\frac{1}{2}+{\frac{\eta}2}}. 
\end{equation*}

Theorem \ref{Tbourgain} now gives us a certain $\delta_2>0$ depending on $\eta$ such that

\begin{equation}\label{B-ready}
S(t,q)\leq t^{1-\delta_2},\, (t\geq R_0), 
\end{equation}
where $S(t,q)$ is as in \eqref{Stq}. We start as in the last section, writing

\begin{equation}\label{I+II-3}
S=S_{I}+S_{II},
\end{equation}
where $S_I$ and $S_{II}$ are as in \eqref{R-3}. We deal $S_I$ in the exact same way as before, only replacing each use of Theorem \ref{TFKM} by the upper bound \eqref{B-ready}. Thus we obtain

\begin{multline}\label{SI-B}
 S_I=\dfrac{6}{\pi^2}\left(1-\dfrac{1}{q^2}\right)\dfrac{X}{q}+ O_{\epsilon,\eta}\left(X^{\epsilon}\left(R^{-1}Xq^{-1}+R^{1-\delta_2}+RY^{-1}+R_0^{-1-\delta_2}XYq^{-1}\right.\right.\\
 \left.\left.+R_0^{-1}XYq^{-\frac{3}{2}}+R_0^{-2}XYq^{-\frac{1}{2}}+R_0\right)\right),
\end{multline}
for any $\epsilon>0$ (compare with \eqref{SIfinal}).

As for $S_{II}$, we have the exactly same bound as in the previous case (see \eqref{SIIfinal}). Gathering \eqref{SIIfinal}, \eqref{SI-B} and \eqref{I+II-3}, we see that

\begin{multline*}
 S-\dfrac{6}{\pi^2}\left(1-\dfrac{1}{q^2}\right)\dfrac{X}{q} \ll_{\epsilon,\eta}X^{\epsilon}\left(R^{-1}Xq^{-1}+R^{1-\delta_2}+RY^{-1}+R_0^{-1-\delta_2}XYq^{-1}\right.\\
 \left.+R_0^{-1}XYq^{-\frac{3}{2}}+R_0^{-2}XYq^{-\frac{1}{2}}+R_0\right).
\end{multline*}
Now, we deduce from \eqref{S-S0}, \eqref{S0} and  \eqref{RR0Y} the inequality (recall that $X^{\eta}\leq q\leq X^{{\frac12}-\eta}$)
\begin{multline}
E(X,q,a) \ll_{\epsilon,\eta}X^{\epsilon}\left(\left(\frac{X}{q}\right)^{{\frac12}-\delta_1}+\left(\frac{X}{q}\right)^{{\frac12}+3\delta_1-{\frac{\delta_2}{2}}+\delta_1\delta_2}\right.\\
\left.+\left(\frac{X}{q}\right)^{{\frac12}+3\delta_1-{\frac{\eta}{2}}}+\left(\frac{X}{q}\right)^{{\frac12}+4\delta_1-2\eta}\right).
\end{multline}
Notice that since $q\leq X^{1/2}$, one has $X^{\epsilon}\leq (X/q)^{2\epsilon}$. Now, taking $\epsilon<\delta_1/4$ and $\delta$ sufficiently small, we deduce

$$
E(X,q,a) \ll_{\eta}\left(\frac{X}{q}\right)^{\frac12-\frac{\delta_1}2}.
$$
Taking $\delta:=\delta_1/ 2$ concludes the proof of Theorem \ref{mainBourg}.

\subsection{Proof of Theorem \ref{2/3}}

Let $X>1$ and let $q$ be a prime number such that $q\leq X$. Since the upper bound from Theorem \ref{2/3} is worse than \eqref{hoo-C3} for $q\leq X^{\frac12}$, we can suppose that $q\geq X^{\frac12}$. Let $S$ and $S_0$ be as in \eqref{S=} and \eqref{S0=}, respectively, and $1<R\leq X^{\frac13}$ be a parameter to be chosen optimally later. We split $S$ as before, writing

\begin{equation}\label{I+II-4}
S=S_{I}+S_{II},
\end{equation}
where $S_I$ and $S_{II}$ are as in \eqref{R-3}. For $S_{I}$, it suffices to detect the congruence trivially. We have

\begin{equation}\label{SI4}
S_I=\displaystyle\sum_{\substack{r\leq R\\(r,q)=1}}\mu(r)\left(\dfrac{X}{qr^2}+O(1)\right)= \frac{6}{\pi^2}\prod_{p\mid q}\left(1-\frac{1}{p^2}\right)^{-1}\frac{X}{q} + O(R+ R^{-1}Xq^{-1}).
\end{equation}

For $S_{II}$, by estimating the $\mu$ function trivially and changing the order of summation, we have that

\begin{align}\label{3+4}
|S_{II}|&\leq \displaystyle\sum_{m\leq X/R^2}\displaystyle\sum_{\substack{r\leq (X/m)^{1/2}\\ r^2m\equiv a \!\!\!\! \pmod q}}1\notag\\
&=S_{III}+S_{IV}, 
\end{align}
where

\begin{equation}\label{S3S4}
\begin{cases}
S_{III}=\displaystyle\sum_{m\leq \frac{X}{R^2(\log X)}}\displaystyle\sum_{\substack{r\leq (X/m)^{1/2}\\ r^2m\equiv a \!\!\!\! \pmod q}}1,\\
S_{IV}=\displaystyle\sum_{\frac{X}{R^2(\log X)}< m\leq \frac{X}{R^2}}\displaystyle\sum_{\substack{r\leq (X/m)^{1/2}\\ r^2m\equiv a \!\!\!\! \pmod q}}1.
\end{cases}
\end{equation}

For $S_{III}$, we detect the congruence trivially, obtaining

\begin{align}\label{SIIIfinal}
S_{III}&= \displaystyle\sum_{m\leq \frac{X}{R^2(\log X)}}\left(\frac{X^{\frac12}}{m^{\frac12}q}+O(1)\right)\notag\\
&\ll \frac{X}{Rq(\log X)^{\frac12}} + \frac{X}{R^2(\log X)}.
\end{align}

As for $S_{IV}$, we proceed by dyadic decomposition of the values of $m$. Doing so, we find that there exists $M\geq 1$, a power of two, such that

\begin{equation}\label{ineqforM}
\frac{X}{2R^2(\log X)}< M\leq \frac{2X}{R^2},
\end{equation}
and

\begin{equation}\label{decoupage}
S_{IV}\ll \mathfrak{S}\log\log X,
\end{equation}
where $\mathfrak{S}=\mathfrak{S}(X,M;q,a)$ is given by

\begin{equation}\label{F}
\mathfrak{S}:=\sum_{M<m\leq 2M}\displaystyle\sum_{\substack{r\leq (X/M)^{1/2}\\ r^2m\equiv a \!\!\!\! \pmod q}}1.
\end{equation}
In what follows, we show how to use the Selberg's sieve (see Theorem \ref{SLB}) to estimate $\mathfrak{S}$.

\subsubsection{Implementing the Selberg's sieve}

Let

\begin{equation}\label{a-n}
a_n=\displaystyle\sum_{\substack{M<m\leq 2M\\mn\equiv a\!\!\!\!\pmod{q}}}1,\text{ if }n\leq \frac{X}{M},\\
\end{equation}
and $a_n=0$, otherwise. Then

$$
\mathfrak{S}=\sum_{n=\square}a_n,
$$
where the condition $n=\square$ means that we only sum over the $n$ that are perfect squares.

Let $P$ be a product of distinct odd primes such that $p\nmid q$. For each $p\mid P$, let $\Omega_p$ denote the set of non-square residue classes modulo $p$. Note that we can soften the condition $n=\square$ to $n\not\in\Omega_p$ for every $p\mid P$. In other words, the following inequality holds:

$$
\mathfrak{S}\leq \sum_{\substack{n\not\in \Omega_p\\ \text{for every }p\mid P}}a_n. 
$$
We want to use Theorem \ref{SLB}. Thus, we need to give asymptotic formulas for $|\mathcal{A}_d|$ (see \eqref{Ad}). We notice that with $a_n$ as in \eqref{a-n}, we have

\begin{equation}\label{A=sumG}
|\mathcal{A}_d|=\sum_{\substack{\alpha\!\!\!\!\pmod{n}\\\alpha\!\!\!\!\pmod{p}\in\Omega_{p}\\ \text{for every }p\mid d}}\mathcal{G}\left(d,\alpha\right), 
\end{equation}
where

\begin{align}\label{Gda}
\mathcal{G}\left(d,\alpha\right):=\sum_{n\equiv\alpha \!\!\!\!\pmod{d}}a_n.
\end{align}
We use the $\psi$ function to evaluate $\mathcal{G}\left(d,\alpha\right)$. Injecting \eqref{a-n} in \eqref{Gda} and interchanging the order of summation, gives

\begin{equation*}
\mathcal{G}\left(d,\alpha\right)=\sum_{M<m\leq 2M}\left(\dfrac{X}{Mdq}-\psi\left(\dfrac{X}{Mdq}-\dfrac{a{\overline{dm}}}{q}-\dfrac{\alpha\overline{q}}{d}\right)+\psi\left(-\dfrac{a{\overline{dm}}}{q}-\dfrac{\alpha\overline{q}}{d}\right)\right),
\end{equation*}
for every $d$ coprime with $q$ and $\alpha\!\!\pmod{d}$. By Lemma \ref{boundpsi}, we obtain that

\begin{equation}\label{G-3}
\mathcal{G}\left(d,\alpha\right)=\dfrac{X}{dq}+O\left(\frac{M\log q(\log\log q)^{4}}{(\log M)^{\frac12}}\right).
\end{equation}
Note that the inequality \eqref{ineqforM} implies
$$
M\geq X^{\frac13}(\log X)^{-1}.
$$
Hence, \eqref{G-3} and \eqref{A=sumG} imply

\begin{equation}\label{Ad=}
|\mathcal{A}_d|=g(d)\dfrac{X}{q}+O\left(d\frac{M(\log\log X)^{4}}{(\log X)^{\frac12}}\right), 
\end{equation}
where $g(d)$ is the multiplicative function supported on squarefree numbers and such that

$$
g(p)=\dfrac{p-1}{2p}.
$$
Let $D>1$ and
$$
P:=\prod_{\substack{2<p\leq \sqrt{D}\\p\nmid q}}p.
$$
We use Theorem \ref{SLB} for $Y=\frac{X}{q}$ and $D$ and $P$ as above. We obtain, in view of \eqref{Ad=}, the inequality 

$$
\mathfrak{S}\leq \sum_{\substack{n\not\in \Omega_p\\ \text{for every }p\mid P}}a_n\ll \frac{X}{q}J^{-1} + \frac{M(\log\log X)^{4}}{(\log X)^{\frac12}}\sum_{d\leq \sqrt{D}}\tau_3(d)d,
$$
where

$$
J=\sum_{\substack{d\mid P\\d<\sqrt{D}}}h(d)\geq \sum_{\substack{p<\sqrt{D}\\p\nmid q}}\frac{g(p)}{1-g(p)}\gg \frac{\sqrt{D}}{\log D}.
$$
Thus

\begin{equation}\label{AfterJ}
\mathfrak{S}\ll \frac{X\log D}{q\sqrt{D}}+ \frac{DM(\log D)^2(\log\log X)^{4}}{(\log X)^{\frac12}}. 
\end{equation}
Gathering \eqref{ineqforM}, \eqref{decoupage} and \eqref{AfterJ}, we see that

$$
S_{IV}\ll \frac{X\log D\log\log X}{D^{\frac12}q} +\frac{DX(\log D)^2(\log\log X)^5 }{ R^2(\log X)^{\frac12}}.
$$
For each $0<\gamma< {\frac12}$, the choice $D=(\log X)^{\gamma}$ gives the inequality

\begin{equation}\label{SIVfinal}
 S_{IV}\ll \frac{X(\log\log X)^2}{q(\log X)^{\frac{\gamma}{2}}} +\frac{X(\log\log X)^7}{R^2(\log X)^{{\frac12}-\gamma}}.
\end{equation}
Putting together \eqref{I+II-4}, \eqref{SI4}, \eqref{3+4}, \eqref{SIIIfinal} and \eqref{SIVfinal}, we obtain

\begin{equation}\label{beforeR}
S-\dfrac{6}{\pi^2}\left(1-\dfrac{1}{q^2}\right)^{-1}\dfrac{X}{q}\ll R + R^{-1}Xq^{-1} +\frac{X(\log\log X)^2}{q(\log X)^{\frac{\gamma}{2}}}+\frac{X(\log\log X)^7}{R^2(\log X)^{{\frac12}-\gamma}}. 
\end{equation}
Forcing the first and last terms to be equal, we are faced with the choice

$$
R=X^{\frac13}(\log X)^{-{\frac16}+{\frac{\gamma}3}}(\log\log X)^{\frac73}.
$$
Replacing it in \eqref{beforeR} gives the inequality

\begin{equation}\label{afterR}
 S-\dfrac{6}{\pi^2}\left(1-\dfrac{1}{q^2}\right)^{-1}\dfrac{X}{q}\ll \frac{X^{\frac13}(\log\log X)^{\frac73}}{(\log X)^{{\frac16}-{\frac{\gamma}3}}}+\frac{X(\log\log X)^2}{q(\log X)^{\frac{\gamma}2}}.
\end{equation}
Theorem \ref{2/3} now follows by combining \eqref{S-S0}, \eqref{S0} and \eqref{afterR}.

\section*{acknowledgements}
This work was done as a part of my Ph.D. thesis under the guidance of Étienne Fouvry. I am very grateful for his suggestions and valuable remarks.
%\end{acknowledgements}

%\bibliographystyle{amsplain}
%\bibliography{references}

\end{document}